\definecolor{foge}{rgb}{0.1, 0.6, 0.1}
\newcommand{\Ll}{\Lambda}
\newcommand{\Z}{\mathbb{Z}}
\newcommand{\B}{\mathcal{B}}
\newcommand{\C}{\mathcal{C}}
\newcommand{\E}{\mathcal{E}}
\newcommand{\Co}{\mathcal{C}}
\newcommand{\Pp}{\mathcal{P}}
\newcommand{\Ppm}{\Pp^{\gg}_{\cmu}}
\newcommand{\Pppm}{\Pp^{\gtrdot}_{\cmu}}
\newcommand{\Psucm}{\Pp^{\succ}_{\cmu}}
\newcommand{\la}{\lambda}
\newcommand{\gf}{\mathfrak g}
\newcommand{\h}{\mathfrak h}
\newcommand{\ot}{\otimes}
\newcommand{\p}{\mathfrak p}
\newcommand{\cmu}{c_{g_0}\cdots c_{g_{t-1}}}
\DeclareMathOperator{\ch}{ch}
\DeclareMathOperator{\sgn}{sgn}
\DeclareMathOperator{\mult}{mult}
\DeclareMathOperator{\wt}{\overline{wt}}
\DeclareMathOperator{\wta}{wt}
\numberwithin{equation}{section}
\newtheorem{theo}{Theorem}[section]
\newtheorem{prop}[theo]{Proposition}
\newtheorem{rem}[theo]{Remark}
\theoremstyle{definition}
\newtheorem{deff}[theo]{Definition}
\title{Partition identities from higher level crystals of $A_1^{(1)}$}
\author{Jehanne Dousse}
\address{Université de Genève, Section de Mathématiques, 7-9 rue du Conseil-Général, CH-1205 Genève, Switzerland}
\email{jehanne.dousse@unige.ch}
\author{Leonard Hardiman}
\address{Chair of Statistical Field Theory, Institute of Mathematics, EPFL, Station 8, CH-1015 Lausanne, Switzerland}
\email{leonard.hardiman@epfl.ch}
\author{Isaac Konan}
\address{Université Claude Bernard Lyon 1, UMR5208, Institut Camille Jordan, F-69622 Villeurbanne, France}
\email{konan@math.univ-lyon1.fr}
\keywords{integer partitions, Rogers--Ramanujan type identities, Kac--Moody Lie algebras, character formulas, crystal bases}
\subjclass[2010]{05A15, 05A17, 05A30, 05E10, 11P81, 11P84, 17B10, 17B65, 17B67}
\begin{document}

\begin{abstract}
  We study perfect crystals for the standard modules of the affine Lie algebra $A_1^{(1)}$ at all levels using the theory of multi-grounded partitions. We prove a family of partition identities which are reminiscent of the Andrews--Gordon identities and companions to the Meurman--Primc identities, but with simple difference conditions involving absolute values.
\end{abstract}

\maketitle

\section{Introduction and statement of results}

The content of this paper lies in the intersection of combinatorics (in particular, the study of partition identities) and the representation theory of affine Kac--Moody algebras. Lepowsky, Milne and Wilson's foundational work~\cite{Le-Mi1,Lepowsky,Lepowsky2} has led to a productive relationship between these two fields. We now recall some background and provide a brief description of the principal mechanism through which this interaction takes place.

A partition of a natural number $n$ is a non-increasing sequence of positive integers whose sum is $n$. For example, the partitions of $4$ are $(4), (3,1), (2,2), (2,1,1)$ and $(1,1,1,1)$.
Among the most famous and ubiquitous partition and $q$-series identities are those of Rogers--Ramanujan \cite{RogersRamanujan}. In $q$-series form, they can be stated as follows:
\begin{theo}[The Rogers--Ramanujan identities]
  \label{th:RR}
  Let $i=0$ or $1$. Then
  $$
  \sum_{n \geq 0} \frac{q^{n^2+ (1-i)n}}{(q;q)_n} = \frac{1}{(q^{2-i};q^5)_{\infty}(q^{3+i};q^5)_{\infty}}.
  $$
\end{theo}
Here and throughout the paper, we use the standard $q$-series notation: for $n \in \mathbb{N} \cup \{\infty\}$ and $j \in \mathbb{N}$,
\begin{align*}
  (a;q)_n &:= \prod_{k=0}^{n-1} (1-aq^k),\\
  (a_1, \dots, a_j ; q)_n &:= (a_1;q)_n \cdots (a_j;q)_n.
\end{align*}

Interpreting the product side of the Rogers--Ramanujan identities as the generating function for partitions with congruence conditions and the sum side as the generating function for partitions with difference conditions yields the combinatorial identities:
\begin{theo}[Rogers--Ramanujan identities, combinatorial version]
  \label{th:RRcomb}
  Let $i=0$ or $1$. For every natural number $n$, the number of partitions of $n$ such that the difference between two consecutive parts is at least $2$ and the part $1$ appears at most $i$ times is equal to the number of partitions of $n$ into parts congruent to $\pm (2-i) \mod 5.$
\end{theo}

These identities were given many proofs and generalisations, see e.g. \cite{AndrewsGordon,Andrews89,Bressoud83,CorteelRSK,Garsiamilne,Gordon61,GOW16}, including the famous Andrews--Gordon identities ~\cite{AndrewsGordon}:
\begin{theo}[Andrews--Gordon identities]
  \label{th:AGseries}
  Let $r$ and $i$ be integers such that $r\geq 2$ and $1\leq i \leq r.$ Let $G_{i,r}(n)$ be the number of partitions $\lambda=(\lambda_1,\lambda_2,\dots,\lambda_s)$ of $n$ such that $\lambda_{j}-\lambda_{j+r-1} \geq 2$ for all $j$, and at most $i-1$ of the $\lambda_j$ are equal to $1$. Then
  $$
  \sum_{n\geq0} G_{i,r}(n) q^n=\frac{(q^{2r+1},q^{i},q^{2r-i+1};q^{2r+1})_\infty}{(q;q)_\infty}.
  $$
\end{theo}
Note that the Rogers--Ramanujan identities correspond to the particular case $r=2$ in Theorem \ref{th:AGseries}.

In~\cite{BressoudAG}, Bressoud found a counterpart for even moduli to the Andrews--Gordon identities.
\begin{theo}[Bressoud]
  \label{th:Bressoud}
  Let $r$ and $i$ be integers such that $r\geq 1$ and $1\leq i \leq r.$ Let $B_{i,r}(n)$ be the number of partitions $\lambda=(\lambda_1,\lambda_2,\dots,\lambda_s)$ of $n$ such that $\lambda_{j}-\lambda_{j+r-1} \geq 2$ for all $j$, if $\lambda_j \leq \lambda_{j+r-2} +1$ then $\lambda_j + \cdots + \lambda_{j+r-2} \equiv i-1 \mod 2$, and at most $i-1$ of the $\lambda_j$ are equal to $1$. We have
  $$
  \sum_{n\geq0} B_{i,r}(n) q^n=\frac{(q^{2r},q^{i},q^{2r-i};q^{2r})_\infty}{(q;q)_\infty}.
  $$
\end{theo}

We now explain how these partition identities are related to representation theory.
We assume that the reader is familiar with the notion of a Kac--Moody algebra, the standard reference text is~\cite{Kac}. Let $\gf$ be an affine Kac--Moody algebra with positive roots $\Delta^{+} \subset \h^{*}$, where $\h$ is a Cartan subalgebra of $\gf$. Let $P$ (resp.\ $P^{+}$) be the corresponding set of integral (resp.\ dominant integral) weights and let $L(\lambda)$ be the irreducible highest weight $\gf$-module of highest weight $\lambda \in P^+$ (also called the standard module with highest weight $\lambda$).
The character of $L(\lambda)$ is defined as
$$
\ch L(\lambda) = \sum_{\mu\in P} \dim L(\lambda)_{\mu} \cdot e^{\mu},
$$
where $e$ is a formal exponential and $L(\lambda)_{\mu}$ is the weight space corresponding to $\mu$. A well studied problem within representation theory is the development of so-called character formulas i.e.\ explicit algebraic expressions for $\ch L(\lambda)$. A well-known example is provided by the Weyl--Kac character formula \cite[Proposition 10.10]{Kac}
\begin{align} \label{eq:WeylKac}
  \ch L(\lambda) = \frac{\sum_{w \in W} \sgn(w) e^{w(\lambda+\rho)-\rho}}{\prod_{\alpha \in \Delta^{+}} (1-e^{-\alpha})^{\mult \alpha}},
\end{align}
where $W$ is the Weyl group and $\rho$ is the Weyl vector. An important feature of the Weyl--Kac character formula is that it \emph{overcounts}, i.e.\ the sum is not sign-free. Finding a sign-free character formula is a difficult task, partial results may be obtained using the theory of vertex operator algebras and equating the resulting formulas with the Weyl--Kac character formula gives rise to certain partition identities. This was the basic strategy employed by Lepowsky and Wilson~\cite{Lepowsky,Lepowsky2} to derive the Rogers--Ramanujan identities from character formulas for level 3 modules over $A_{1}^{(1)}$. Subsequently, other partition identities were obtained (some of which were previously unknown to the combinatorics community) using different levels and algebras, see, for example, \cite{Capparelli,Meurman,Meurman2,Meurman3,Nandi,Primc1,PrimcSikic,Siladic}. More detail on the history of this interaction can be found in the introduction of~\cite{DK19}.

Let us state in detail a partition identity which was obtained by Meurman and Primc \cite{Meurman2} through the study of vertex operator algebras for higher level modules of $A_{1}^{(1)}$.
An integer partition can equivalently be defined in terms of frequencies, i.e.\ by a sequence $(f_1,f_2,f_3,\dots)$ where for every natural number $k$, $f_k$ denotes the number of appearances of $k$ in the partition. Meurman and Primc's result reads as follows (slightly reformulated):
\begin{theo}[Meurman--Primc 1999]
  \label{th:MP}
  Let $n$ and $i$ be non-negative integers such that $0 \leq i \leq n$.
  Let $M_{i,n}(m)$ denote the number of partitions of $m$ in two colours, plain and underlined, such that their frequencies $(f_1,f_2,f_3,\dots)$ satisfy, for all $k \geq 1$: $$f_{\underline{2k}} = 0,$$
  \begin{align*}
    f_{2k+1}+f_{2k}+f_{2k-1} &\leq n,\\
    f_{2k}+f_{\underline{2k-1}}+f_{2k-1} &\leq n,\\
    f_{\underline{2k+1}}+f_{2k+1}+f_{2k}&\leq n,\\
    f_{\underline{2k+1}}+f_{2k}+f_{\underline{2k-1}}&\leq n,
  \end{align*}
  and
  $$
  f_{\underline{1}} \leq i, \qquad f_1 \leq n-i.
  $$
  Then
  \begin{equation}
    \label{eq:MP1}
    \sum_{n\geq 0} M_{i,n}(m)q^m = \frac{(q^{i+1},q^{n-i+1},q^{n+2};q^{n+2})_\infty}{(q;q^2)_\infty(q;q)_\infty}.
  \end{equation}
\end{theo}
Note that up to the factor $1/(q;q^2)_{\infty}$, the product side of~\eqref{eq:MP1} is exactly the product side of the Andrews--Gordon and Bressoud identities.
Meurman and Primc's identity is a great example of a partition identity for which it seems highly unlikely that guessing the identity using only the combinatorics of integer partitions would have been possible, and where representation theory plays a key role in the shape of the difference conditions.

\medskip

An alternative approach to finding a sign-free character formula is provided by the theory of \emph{crystal bases}, a good reference text for which is~\cite{HK}. Said theory allows one to describe the character $\ch L(\lambda)$ in terms of a \emph{crystal} i.e.\ a directed graph $\B$ together with a weight function $\wta \colon \B \to P$ satisfying certain conditions (see, e.g.\ Defintion~4.5.1 in \cite{HK}). In particular, the crystal graph $\B(\lambda)$ of a crystal basis of $L(\lambda)$ will satisfy
\begin{align*}
  \ch L(\lambda) = \sum_{b \in \B(\lambda)} e^{\wta b}.
\end{align*}
Kashiwara~\cite{10.1215/S0012-7094-91-06321-0} proved the existence and uniqueness of crystal bases. Therefore, understanding the structure of the corresponding crystal graph automatically grants one a sign-free character formula. Kashiwara et al.\ ~\cite{KMN2} then provided a description of $\B(\lambda)$ in terms of certain particularly well-behaved crystals for the associated \emph{classical} algebra, called \emph{perfect crystals}. As before, equating the resulting crystal character formula with the Weyl--Kac formula can lead to a partition identity. This was first accomplished by Primc~\cite{Primc}, using level 1 modules over $A_{1}^{(1)}$ and $A_{2}^{(1)}$. This work was later generalised to level 1 modules over $A_{n}^{(1)}$ by the first and third authors in~\cite{DK19,DK19-2}, and generalised to treat other modules in \cite{DKmulti} (more on this in Section \ref{sec:ground}). In this paper, we study different perfect $A_{1}^{(1)}$ crystals of arbitrary level and provide new partition identities and sign-free character formulas.

\medskip
We now describe our main result. Let $n$ be a non-negative integer. Let $\mathcal{C}_n$ denote the set of $(n+1)$-coloured partitions $(\la_1,\dots,\la_s)$, where each part is a non-negative integer indexed by a colour taken from $\{c_0,c_1,\dots,c_n\}$, such that for all $1 \leq i \leq s-1$, $$\la_i-\la_{i+1}=|u_i - u_{i+1}|,$$
where for all $i\in \{1,\dots,s\}$, $\la_i$ has colour $c_{u_i}$. Similarly, let $\mathcal{C}_n^{\geq }$ denote the set of $(n+1)$-coloured partitions such that  $\la_i-\la_{i+1}\geq |u_i - u_{i+1}|.$

Our main result is a new family of partition identities which are companions (i.e.\ same infinite product but other difference conditions) to the Meurman--Primc identities.

\begin{theo}\label{th:mainidentity}
  Let $n$ and $i$ be non-negative integers such that $0 \leq i \leq n$.
  Let $C_{i,n}(m)$ be the number of $(n+1)$-coloured partitions of $m$ in $\mathcal{C}_n$ such that the last part is $0_{c_i}$ and the penultimate part has colour different from $c_i$. Let $C_{i,n}^{\geq}(m)$ be the number of $(n+1)$-coloured partitions of $m$ in $\mathcal{C}_n^{\geq}$ such that the last part is $0_{c_i}$ and the penultimate part is different from $0_{c_i}$. Then, we have the identities
  \begin{align}
    \label{eq:agcompanion}
    \sum_{n\geq 0} C_{i,n}(m)q^m &= \frac{(q^{i+1},q^{n-i+1},q^{n+2};q^{n+2})_\infty}{(q;q^2)_\infty(q;q)_\infty},\\
    \label{eq:agcompanionbis}
    \sum_{n\geq 0} C_{i,n}^{\geq}(m)q^m &= \frac{(q^{i+1},q^{n-i+1},q^{n+2};q^{n+2})_\infty}{(q;q^2)_\infty(q;q)^2_\infty}.
  \end{align}
\end{theo}

Just like for the Meurman--Primc identity, up to the factor $1/(q;q^2)_\infty$ (resp.\ $1/((q;q^2)_\infty (q;q)_{\infty}))$,
the product on the right-hand side of \eqref{eq:agcompanion} (resp.\ \eqref{eq:agcompanionbis}) is exactly the product side of the Andrews--Gordon and Bressoud identities. On the other hand, it is -- at least to the extent of our knowledge -- the first time in the literature that difference conditions involving absolute values arise in partition identities. It is also interesting to note that our difference conditions are very different from those of Theorem \ref{th:MP}, and arguably simpler. This highlights the fact that the approaches via vertex operator algebras and crystal bases, though both in the realm of representation theory, are very different and give rise to very different results.

To prove Theorem \ref{th:mainidentity}, we use energy matrices for level $n$ perfect crystals of $A_1^{(1)}$ and the theory of grounded partitions introduced by the first and third authors \cite{DK19-2} to express the characters of standard modules as generating functions for coloured partitions, which gives the left-hand side of \eqref{eq:agcompanion}. The right-hand side of \eqref{eq:agcompanion} comes from the principal specialisation of the Weyl--Kac character formula.

\medskip
Finally, to illustrate the effectiveness of (multi-)grounded partitions in the study of characters of standard modules we build upon work of the first and third authors on multi-grounded partitions \cite{DKmulti} and derive certain simple \textit{non-specialised} character formulas with manifestly positive coefficients for all level 2 standard modules of $A_1^{(1)}$. Although these formulas are not original, see Remark~\ref{rem:known-to-experts}, we find value in their proofs as presented here, as they are combinatorial in nature and may therefore prove more approachable for certain readers than pre-existing arguments.

Let $G=G(x_1, \dots, x_n)$ be a power series in $x_1, \dots, x_n$. For $k \leq n$, we denote by $\E_{x_1, \dots , x_k} (G)$ the sub-series of $G$ where we only keep the terms in which the total degree in $x_1, \dots , x_k$ is even.
If $G$ has only positive coefficients, then  for all $k$, $\E_{x_1, \dots , x_k} (G)$ also has positive coefficients and can be obtained easily from $G$ via the formula
\begin{equation}
  \label{eq:evennumberparts}
  \E_{x_1, \dots , x_k} (G)= \frac{1}{2} \Big(G(x_1, \dots, x_k, x_{k+1}, \dots , x_n) + G(-x_1, \dots, -x_k, x_{k+1}, \dots , x_n)\Big).
\end{equation}
\begin{prop}\label{prop:characterlevel2}
  Let $\Lambda_0, \Lambda_{1}$ be the fundamental weights and $\alpha_0, \alpha_1$ be the simple roots of $A_{1}^{(1)}.$
  Let $\delta= \alpha_0+\alpha_1$ be the null root. We have
  \begin{align}
    e^{-(\Ll_0+\Ll_1)}\ch L(\Ll_0+\Ll_1) &= (-e^{-\alpha_0},-e^{-\alpha_1},-e^{-\delta};e^{-\delta})_\infty, \label{eq:a11}
    \\e^{-2\Ll_0}\ch L(2\Ll_0) &= \frac{1}{2}\left[(-e^{-\frac{\delta}{2}+\alpha_1},-e^{-\frac{\delta}{2}-\alpha_1},-e^{-\frac{\delta}{2}};e^{-\delta})_\infty+(e^{-\frac{\delta}{2}+\alpha_1},e^{-\frac{\delta}{2}-\alpha_1},e^{-\frac{\delta}{2}};e^{-\delta})_\infty\right], \label{eq:a12}
    \\e^{-2\Ll_1}\ch L(2\Ll_1) &= \frac{1}{2}\left[(-e^{-\frac{3\delta}{2}+\alpha_1},-e^{\frac{\delta}{2}-\alpha_1},-e^{-\frac{\delta}{2}};e^{-\delta})_\infty+(e^{-\frac{3\delta}{2}+\alpha_1},e^{\frac{\delta}{2}-\alpha_1},e^{-\frac{\delta}{2}};e^{-\delta})_\infty\right]. \label{eq:a13}
  \end{align}
  Note that \eqref{eq:a12} and \eqref{eq:a13} are in the form of \eqref{eq:evennumberparts} and therefore have manifestly positive coefficients.
\end{prop}

\begin{rem}
  \label{rem:known-to-experts}
  These character formulas are not original. Indeed they can be derived by applying the `denominator' formula,
    \begin{align*}
      \sum_{w \in W} \sgn(w)e^{w(\rho)-\rho} = \prod_{\alpha \in \Delta_+} (1-e^{-\alpha})^{\mult\alpha},
    \end{align*}
    to the numerator of the Weyl--Kac character formula~\eqref{eq:WeylKac} with $ \lambda = \rho $.
In the case of $A_1^{(1)}$, the denominator formula is the same as Jacobi's triple product identity.
Equations \eqref{eq:a12} and \eqref{eq:a13} can be derived by applying this identity to \cite[(2.18b)]{MR1873994} and then combining the result with \cite[(2.17)]{MR1873994}.
\end{rem}

The paper is organised as follows. In Section \ref{sec:ground}, we recall the necessary background on (multi-)grounded partitions. In Section \ref{sec:crystal}, we study the energy matrices of level $n$ perfect crystals of $A_1^{(1)}$ and prove Theorem \ref{th:mainidentity}. Finally, in Section \ref{sec:level2}, we derive the non-specialised character formulas of Proposition \ref{prop:characterlevel2} and notice an intriguing connection with Capparelli's identity.

\section{Background on perfect crystals and multi-grounded partitions}
\label{sec:ground}
We start by briefly recalling the theory of perfect crystals. Let $\gf$ be an affine Kac--Moody algebra with simple positive roots $\alpha_{0},  \dots,  \alpha_{n}$ and with null root $\delta = d_{0}\alpha_{0} + \cdots + d_{n}\alpha_{n} $. For $\lambda \in \bar P^{+}$, let $\B(\lambda)$ the crystal graph of a crystal basis of $L(\lambda)$. For an integer level $\ell \geq 1$ and a weight $\lambda \in \bar P^+_\ell$, Kashiwara et al.\ ~\cite[Section 1.4]{KMN2} define the notion of a \emph{perfect crystal $\B$ of level $\ell$}, an \emph{energy function} $H\colon \B \otimes \B \to \mathbb{Z}$, and a particular element
$$
{\p}_\lambda = \bigl(g_k)_{k=0}^\infty =  \cdots \ot g_{k+1} \ot g_k \ot \cdots \ot g_1 \ot g_0 \in \B^{\infty},
$$
called the \emph{ground state path of weight} $\lambda$ (see Section 1.4 of their paper for precise definitions). From this they consider all elements of the form
$$\p = (p_k)_{k=0}^\infty =  \cdots \ot p_{k+1} \ot p_k \ot \cdots \ot p_1 \ot p_0 \in \B^{\infty},$$
which satisfy $p_k = g_k$ for large enough $k$. Such elements are called $\lambda$-paths; their collective set is denoted $\mathcal{P}(\lambda)$. The crystal $\B(\lambda)$ can then be realised on the set of $\lambda$-paths, in particular the affine weight function is given by the following theorem.
\begin{theo}[(KMN)$^2$ crystal base character formula \cite{KMN2a}]
  \label{theorem:wtchar}
  Let $\lambda \in \bar P^+_{\ell}$, let $H$ be an energy function on $\B \ot \B$, let $ \mathcal{P}(\lambda)$ be the set of $\lambda$-paths, and let $\p = (p_k)_{k=0}^\infty \in \mathcal P(\lambda)$.
  Then the weight of $\p$ is given by the following expressions:
  \begin{align*}
    \wta \p &= \lambda + \sum_{k=0}^\infty \left(\wt p_k -\wt g_k\right)  - \frac{\delta}{d_0}\sum_{k=0}^\infty (k+1)\Big(H(p_{k+1} \ot p_k) - H(g_{k+1}\ot g_k)\Big),   \\
            &= \lambda + \sum_{k=0}^\infty \left(\left(\wt p_k -\wt g_k\right)  -  \frac{\delta}{d_0}\sum_{\ell=k}^\infty (H(p_{\ell+1} \ot p_\ell) - H(g_{\ell+1}\ot g_\ell))\right)\! ,
  \end{align*}
  where $\wt$ is the weight function of $\B$. As $\mathcal{P}(\lambda) \cong \B(\lambda)$, we have the character formula
  \begin{align*}
    \ch L(\lambda) &= \sum_{\p \in \mathcal P(\lambda)}  e^{\wta \p}. %
  \end{align*}
\end{theo}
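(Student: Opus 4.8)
The plan is to derive this from the \emph{path realization} of $\B(\lambda)$ due to Kashiwara--Miwa--Nakashima--Nakayashiki \cite{KMN2}, tracking the affine weight through the construction and separating it into its classical and $\delta$-parts. First recall the mechanism producing $\mathcal{P}(\lambda)$. Since $\B$ is perfect of level $\ell$, for $\lambda\in\bar P^+_\ell$ perfectness provides a unique minimal $b_\lambda\in\B$ with $\varphi(b_\lambda)=\bar\lambda$; writing $\sigma\lambda$ for the level-$\ell$ weight with classical part $\varepsilon(b_\lambda)$, there is a crystal isomorphism $\B(\lambda)\iso\B(\sigma\lambda)\otimes\B$ sending the highest weight vector $u_\lambda$ to $u_{\sigma\lambda}\otimes b_\lambda$ (a direct check with the tensor product rule confirms the latter is highest weight of the correct weight). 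Iterating gives $\B(\lambda)\cong\B(\sigma^N\lambda)\otimes\B^{\otimes N}$ with $u_\lambda\mapsto u_{\sigma^N\lambda}\otimes(g_{N-1}\otimes\cdots\otimes g_0)$, where $g_k=b_{\sigma^k\lambda}$; letting $N\to\infty$ identifies $\B(\lambda)$ with the set $\mathcal{P}(\lambda)$ of sequences eventually equal to $\p_\lambda$, sending $u_\lambda$ to $\p_\lambda$. I would import this --- the core of perfect crystal theory --- rather than reprove it.

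The rest is weight bookkeeping, carried out by induction on the truncation level $N$. The classical part is immediate: $\wt(g_k)=\varphi(b_{\sigma^k\lambda})-\varepsilon(b_{\sigma^k\lambda})=\overline{\sigma^k\lambda}-\overline{\sigma^{k+1}\lambda}$, so $\overline{\sigma^N\lambda}=\bar\lambda-\sum_{k=0}^{N-1}\wt(g_k)$, whence a path $\p$ agreeing with $\p_\lambda$ beyond position $N$, viewed as $u_{\sigma^N\lambda}\otimes(p_{N-1}\otimes\cdots\otimes p_0)$, has classical weight $\overline{\sigma^N\lambda}+\sum_{k=0}^{N-1}\wt(p_k)=\bar\lambda+\sum_{k\geq0}(\wt p_k-\wt g_k)$, the tail terms vanishing. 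For the $\delta$-part I would pass to the affinization $\B^{\mathrm{aff}}$ and use the energy function $H$: it is $\tilde e_i$- and $\tilde f_i$-invariant for $i\neq0$ and changes by $\pm1$ under $\tilde e_0,\tilde f_0$ according to the comparison of $\varphi_0(p_{k+1})$ with $\varepsilon_0(p_k)$, which is exactly the rule recording the $\delta$-shift of $\tilde e_0,\tilde f_0$ across a tensor product. As $\B(\lambda)^{\mathrm{aff}}\cong\B(\sigma^N\lambda)^{\mathrm{aff}}\otimes(\B^{\mathrm{aff}})^{\otimes N}$ is an isomorphism of connected affine crystals, it is unique up to the one $\delta$-ambiguity, which is pinned down by requiring $\p_\lambda$ to carry weight $\lambda$; computing the degree (the $\delta$-coefficient of $\wta\p$) through the successive tensor steps --- each step shifting the degree by the energy of the newly created interface, regularized against the ground path --- yields $-\tfrac{1}{d_0}\sum_{k\geq0}\sum_{\ell\geq k}\bigl(H(p_{\ell+1}\otimes p_\ell)-H(g_{\ell+1}\otimes g_\ell)\bigr)$, the inner sum collecting the energies of all interfaces lying above the $k$-th factor and the $1/d_0$ accounting for $\alpha_0$ entering $\delta$ with coefficient $d_0$. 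This is the second displayed formula; the first follows by Abel summation, since for each $\ell$ the interface term $H(p_{\ell+1}\otimes p_\ell)-H(g_{\ell+1}\otimes g_\ell)$ occurs for $k=0,\dots,\ell$, i.e.\ with multiplicity $\ell+1$, and every sum is finite because $\p$ eventually equals $\p_\lambda$.

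The character formula is then immediate: $\B(\lambda)$ is the crystal graph of a crystal basis of $L(\lambda)$, so $\dim L(\lambda)_\mu=\#\{b\in\B(\lambda):\wta b=\mu\}$, and $\mathcal{P}(\lambda)\cong\B(\lambda)$ as $P$-weighted sets by the above. The main obstacle is not in this bookkeeping but in the imported input: building the isomorphism $\B(\lambda)\cong\B(\sigma\lambda)\otimes\B$, establishing its compatibility with iteration, and upgrading it to the affine degree grading through the energy function is the substantial content of \cite{KMN2}. Granting that, the only real care needed here is fixing the sign and normalization conventions of $H$ so that the $\delta$-shift it records matches the $\delta/d_0$ appearing in the statement.
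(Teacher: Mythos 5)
The paper does not prove this statement at all: it is quoted verbatim as background from the reference \cite{KMN2a}, so there is no internal proof to compare against. Your proposal is instead a reconstruction of the argument from the original source, and as a sketch it is architecturally sound: the identification $\B(\lambda)\cong\B(\sigma^N\lambda)\ot\B^{\ot N}$ via perfectness, the telescoping of classical weights using $\wt g_k=\overline{\sigma^k\lambda}-\overline{\sigma^{k+1}\lambda}$, the normalization of the $\delta$-ambiguity by $\wta\p_\lambda=\lambda$, and the Abel summation relating the two displayed forms (each interface $\ell$ appearing for $k=0,\dots,\ell$, hence with multiplicity $\ell+1$, all sums finite since $p_k=g_k$ eventually) are all correct. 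The one place where your sketch is thinner than it should be is the $\delta$-part: you assert that ``computing the degree through the successive tensor steps yields'' the double sum, but the actual content of that step is the verification that the candidate function $\wta$ is equivariant under all Kashiwara operators including $\tilde e_0,\tilde f_0$ --- i.e.\ that $\wta(\tilde f_i\p)=\wta(\p)-\alpha_i$ --- which is where the defining recursion of the energy function ($H$ invariant under $\tilde e_i$ for $i\neq0$ and shifting by $\pm1$ under $\tilde e_0$ according to the comparison of $\varphi_0$ and $\varepsilon_0$) is genuinely used rather than merely cited. Since both you and the paper ultimately lean on \cite{KMN2} for the substantive input (the iterated path isomorphism and the properties of $H$), this is an acceptable level of detail for an imported theorem, but if you intended the sketch to stand as a proof you would need to carry out that equivariance check explicitly.
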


In \cite{DK19-2}, the first and third authors used bijections to transform the (KMN)$^2$ crystal base character formula into a formula expressing characters as generating functions for so-called ``grounded partitions'', in the case where the ground state path of the module considered was constant. In \cite{DKmulti}, they generalised their theory to treat the cases of all ground state paths, thereby introducing multi-grounded partitions. We now outline this latter more general theory, which contains the former as a particular case.

First, recall the definition of multi-grounded partitions.

\begin{deff}
  Let $\C$ be a set of colours, and let $\Z_{\C} = \{k_c : k \in \Z, c \in \C\}$ be the set of integers coloured with the colours of $\C$.
  Let $\succ$ be a binary relation defined on $\Z_{\C}$.
  A \textit{generalised coloured partition} with relation $\succ$ is a finite sequence $(\pi_0,\dots,\pi_s)$ of coloured integers, such that for all $i \in \{0, \dots, s-1\},$ $\pi_i\succ \pi_{i+1}.$
\end{deff}

In the following, if $\pi=(\pi_0,\dots,\pi_s)$ is a generalised coloured partition, then $c(\pi_i) \in \C$ denotes the colour of the part $\pi_i$. The quantity $|\pi|=\pi_0+\cdots+\pi_s$ is the weight of $\pi$, and $C(\pi) = c(\pi_0)\cdots c(\pi_s)$ is its colour sequence.

While multi-grounded partitions can be defined purely combinatorially, in practice we will use them with a colour set which is indexed by the vertices of a perfect crystal $\B$. Thus our notation for the colours' indices reflects that correspondence and can be readily used to set equalities as in the statement of Theorem \ref{th:char}.

\begin{deff}\label{deff:multiground}
  Let $\Co$ be a set of colors, $\Z_{\Co}$ the set of integers coloured with colours in $\C$, and $\succ$ a binary relation defined on $\Z_{\Co}$. Suppose that there exist some colors $c_{g_0},\dots,c_{g_{t-1}}$ in $\Co$
  and \textit{unique} coloured integers $u_{c_{g_0}}^{(0)},\dots, u_{c_{g_{t-1}}}^{(t-1)}$ such that
  \begin{align*}
    &u^{(0)}+\cdots+u^{(t-1)}=0,  \\ %
    &u_{c_{g_0}}^{(0)}\succ u_{c_{g_1}}^{(1)}\succ \dots\succ u_{c_{g_{t-1}}}^{(t-1)}\succ u_{c_{g_0}}^{(0)}.   \label{eq:cyclgrounds}
  \end{align*}
  Then a \textit{multi-grounded partition} with colours $\C$, ground $c_{g_0},\dots,c_{g_{t-1}}$ and relation $\succ$ is a non-empty generalised coloured partition $\pi = (\pi_0,\dots,\pi_{s-1},u_{c_{g_0}}^{(0)},\dots, u_{c_{g_{t-1}}}^{(t-1)})$ with relation $\succ$, such that
  $(\pi_{s-t},\dots,\pi_{s-1})\neq (u_{c_{g_0}}^{(0)},\dots, u_{c_{g_{t-1}}}^{(t-1)})$ in terms of coloured integers.
\end{deff}
\noindent We denote by $\Psucm$ the set of multi-grounded partitions with ground $c_{g_0},\dots,c_{g_{t-1}}$ and relation $\succ$. We do not make the set of colours $\C$ explicit in the notation as it should be clear from the context.

Grounded partitions, which were introduced in \cite{DK19-2}, are a particular case of multi-grounded partitions where the ground is reduced to one colour $c_{g_0}$ (and therefore the smallest part has to be $0_{c_{g_0}}$).

\medskip
We now recall the connection between multi-grounded partitions, ground state paths, perfect crystals and character formulas.

For $\ell\geq 1$, let $\B$ be a perfect crystal of level $\ell$ and let $\lambda \in \bar P_{\ell}^+$ be a level $\ell$ dominant classical weight with ground state path $\p_{\lambda}=(g_k)_{k\geq 0}$ (which is always periodic). Let us set $t$ to be the \textit{period} of the ground state path, i.e.\ the smallest non-negative integer $k$ such that $g_k=g_0$.
Let $H$ be an energy function on $\B\ot\B$. Since $\B\ot\B$ is connected, $H$ is uniquely determined by fixing its value on a particular $\tilde{b} \ot \tilde{b}' \in \B \ot \B$.

We now define the function $H_{\lambda}$, for all $b, b' \in \B\ot\B$, by
\begin{equation}\label{eq:Hlamb}
  H_{\lambda} (b\ot b') := H(b\ot b')-\frac{1}{t}\sum_{k=0}^{t-1} H(g_{k+1}\ot g_k)\,.
\end{equation}
Thus we have
\begin{equation*}
  \sum_{k=0}^{t-1} H_{\lambda}(g_{k+1}\ot g_k)=0.
\end{equation*}
The function $H_{\lambda}$ satisfies all the properties of energy functions, except that it does not have integer values unless $t$ divides $\sum_{k=0}^{t-1} H(g_{k+1}\ot g_k).$
With this new notation, we can rewrite the (KMN)$^2$ formula for the weight of a $\lambda$-path in the following way.
Let $m\geq 0$ and $\p = (p_k)_{k=0}^\infty \in \mathcal P(\lambda)$ such that $p_{m't+i}=g_i$ for $m'\geq m$ and $i\in \{0,\dots,t-1\}$. We have

\begin{equation}\label{eq:withlambda}
  \wta \p = \lambda +\sum_{k=0}^{mt-1} \wt p_k  +\frac{m\delta}{d_0}\sum_{k=0}^{t-1} (k+1) H_{\lambda}(g_{k+1}\ot g_k) -\frac{\delta}{d_0}\sum_{k=0}^{mt-1} (k+1)H_{\lambda}(p_{k+1} \ot p_k).
\end{equation}

Note that for any energy function $H$, we always have
\[\sum_{k=0}^{t-1} (k+1) H_{\lambda}(g_{k+1}\ot g_k)= \sum_{k=0}^{t-1} (k+1) H(g_{k+1}\ot g_k) -\frac{t+1}{2}\sum_{k=0}^{t-1} H(g_{k+1}\ot g_k) \in \frac{1}{2}\Z  .\]
The quantity above is an integer as soon as $t$ is odd, and is equal to $0$ when $t=1$. Thus we can choose a suitable divisor $D$ of $2t$ such that
$DH_{\lambda}(\B\ot\B)\subset \Z$ and $\frac{1}{t}\sum_{k=0}^{t-1} (k+1) DH_{\lambda}(g_{k+1}\ot g_k) \in \Z$. For the entirety of this paper, $D$ always denotes such an integer.

Let us now consider the set of colours $\Co_{\B}$ indexed by $\B$, and let us define the relations $\gtrdot$  and $\gg$ on $\Z_{\Co_{\B}}$ by
\begin{align}\label{eq:DHLeg}
  k_{c_b}\gtrdot k'_{c_{b'}} &\Longleftrightarrow k-k'= DH_\lambda(b'\ot b), \\
  \label{eq:DHLin}
  k_{c_b}\gg k'_{c_{b'}} &\Longleftrightarrow k-k'\geq DH_\lambda(b'\ot b).
\end{align}
We can define multi-grounded partitions associated with these relations, as can be seen in the next proposition.

\begin{prop}[\cite{DKmulti}]
  \label{prop:multigroundedparts}
  The set $\Pppm$ (resp. $\Ppm$) of multi-grounded partitions with ground $c_{g_0},\dots,c_{g_{t-1}}$ and relation $\gtrdot$ (resp. $\gg$) is the set of non-empty generalised coloured partitions $\pi = (\pi_0,\dots,\pi_{s-1},u_{c_{g_0}}^{(0)},\dots, u_{c_{g_{t-1}}}^{(t-1)})$ with relation $\gtrdot$ (resp. $\gg$), such that
  $(\pi_{s-t},\dots,\pi_{s-1})\neq (u_{c_{g_0}}^{(0)},\dots, u_{c_{g_{t-1}}}^{(t-1)})$, and for all $k \in \{0, \dots , t-1\},$
  \begin{equation*}%
    u^{(k)} = -\frac{1}{t} \sum_{\ell=0}^{t-1} (\ell+1) DH_{\lambda}(g_{\ell+1}\ot g_\ell) + \sum_{\ell=k}^{t-1}DH_{\lambda}(g_{\ell+1}\ot g_\ell).
  \end{equation*}
\end{prop}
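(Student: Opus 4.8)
The plan is to show that the abstract \Def{deff:multiground} specialises, for the relation $\gtrdot$ (resp.\ $\gg$) of \eqref{eq:DHLeg} (resp.\ \eqref{eq:DHLin}) and the ground colours $c_{g_0},\ldots,c_{g_{t-1}}$ read off from the periodic ground state path, to the concrete description in the statement. Concretely, the only thing that genuinely requires proof is that the hypotheses of \Def{deff:multiground} are met for this choice: that there exist \emph{unique} coloured integers $u^{(0)}_{c_{g_0}},\ldots,u^{(t-1)}_{c_{g_{t-1}}}$ satisfying both the zero-sum condition and the cyclic chain, and that they are given by the displayed formula. Everything else asserted (that $\Pppm$, resp.\ $\Ppm$, consists precisely of the generalised coloured partitions with relation $\gtrdot$, resp.\ $\gg$, subject to the prescribed tail condition $(\pi_{s-t},\cdots,\pi_{s-1})\neq (u_{c_{g_0}}^{(0)},\ldots, u_{c_{g_{t-1}}}^{(t-1)})$) is then just \Def{deff:multiground} written out verbatim for this relation.

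First I would treat the relation $\gtrdot$. Writing $a_k := DH_\lambda(g_{k+1}\ot g_k)$ and using the periodicity $g_t=g_0$ for the wrap-around step, the cyclic chain $u^{(0)}_{c_{g_0}}\gtrdot\cdots\gtrdot u^{(t-1)}_{c_{g_{t-1}}}\gtrdot u^{(0)}_{c_{g_0}}$ unfolds, via the definition \eqref{eq:DHLeg}, into the system of exact equations $u^{(k)}-u^{(k+1)}=a_k$ for $k=0,\ldots,t-1$ with indices taken cyclically ($u^{(t)}:=u^{(0)}$). Summing these around the cycle telescopes the left-hand side to $0$ and forces $\sum_{k=0}^{t-1}a_k=0$; this is exactly the normalisation $\sum_{k=0}^{t-1}H_\lambda(g_{k+1}\ot g_k)=0$ built into \eqref{eq:Hlamb}, so the system is consistent. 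The differences being thereby pinned down, each $u^{(k)}$ is determined up to a single additive constant, which the zero-sum condition $u^{(0)}+\cdots+u^{(t-1)}=0$ fixes uniquely. A short telescoping computation, simplified using $\sum_k a_k=0$, then yields precisely the displayed value of $u^{(k)}$.

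Next I would handle the relation $\gg$, where the key point is that the inequality chain must collapse onto the equality chain just solved. Indeed, \eqref{eq:DHLin} turns the cycle $u^{(0)}_{c_{g_0}}\gg\cdots\gg u^{(t-1)}_{c_{g_{t-1}}}\gg u^{(0)}_{c_{g_0}}$ into the inequalities $u^{(k)}-u^{(k+1)}\geq a_k$ for all $k$; summing them around the cycle gives $0\geq\sum_{k=0}^{t-1}a_k=0$, so every inequality must be an equality. Hence the $\gg$-cycle coincides with the $\gtrdot$-cycle and produces the same unique coloured integers $u^{(k)}_{c_{g_k}}$, establishing the claim for $\Ppm$ and $\Pppm$ simultaneously.

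The main obstacle is this existence-and-uniqueness step, and it rests on the two observations that must be checked with care: that the cyclic system of difference equations is \emph{consistent}, which is guaranteed exactly by the $H_\lambda$-normalisation of \eqref{eq:Hlamb}, and that in the $\gg$-case the cycle of inequalities is forced to be tight. Once these are in place, reconciling the explicit solution with the stated formula is routine manipulation of finite sums — reindexing via $t-1-j=t-(j+1)$ to produce the constant term $-\tfrac1t\sum_{\ell=0}^{t-1}(\ell+1)a_\ell$, and using $\sum_k a_k=0$ to identify the tail $-\sum_{j=0}^{k-1}a_j$ with $\sum_{\ell=k}^{t-1}DH_\lambda(g_{\ell+1}\ot g_\ell)$ — so I would not belabour it.
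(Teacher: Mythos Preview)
The paper does not actually prove this proposition: it is simply quoted from \cite{DKmulti} with no argument given, so there is no ``paper's own proof'' to compare against. Your proposal is correct and is the natural (essentially unique) way to verify that \Def{deff:multiground} is satisfied here: the cyclic $\gtrdot$-chain unfolds to $u^{(k)}-u^{(k+1)}=a_k$, consistency of the cycle is exactly the normalisation $\sum_k a_k=0$ from \eqref{eq:Hlamb}, the zero-sum condition pins down the additive constant, and for $\gg$ the telescoped inequality forces equality throughout. One small point you leave implicit and might make explicit in a clean write-up: the $u^{(k)}$ must be \emph{integers} for the notion of coloured integer to make sense, and this is precisely why the divisor $D$ was chosen so that $DH_\lambda(\B\ot\B)\subset\Z$ and $\tfrac{1}{t}\sum_{k=0}^{t-1}(k+1)DH_\lambda(g_{k+1}\ot g_k)\in\Z$.
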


In order to give a more general connection between multi-grounded partitions and character formulas, we use some further sets of multi-grounded partitions.
For any positive integer $d$, let
$^d\Ppm$ denote the set of multi-grounded partitions $\pi=(\pi_0,\dots,\pi_{s-1},u_{c_{g_0}}^{(0)},\dots, u_{c_{g_{t-1}}}^{(t-1)})$ of $\Ppm$ such that for all $k\in\{0,\dots,s-1\}$,
\begin{equation*}
  \pi_k-\pi_{k+1} - DH_{\lambda}(p_{k+1}\ot p_{k}) \in d\Z_{\geq 0},
\end{equation*}
where $c(\pi_k)=c_{p_k}$ and $\pi_s=u_{c_{g_0}}^{(0)}$.

Finally, let $_t^d\Ppm$ (resp. $_t\Ppm$, $_t\Pppm$) denote the set of multi-grounded partitions of $^d\Ppm$ (resp. $\Ppm$, $\Pppm$) whose number of parts is divisible by $t$.

Now the character formulas connecting perfect crystals to multi-grounded partitions can be stated as follows.

\begin{theo}[Dousse--Konan 2021]
  \label{th:char}
  Setting $q=e^{-\delta/(d_0D)}$ and $c_b=e^{\wt b}$ for all $b\in \B$, we have $c_{g_0}\cdots c_{g_{t-1}}=1$, and the character of the irreducible highest weight module
  $L(\lambda)$ is given by the following expressions:
  \begin{align*}
    \sum_{\mu\in _t\Pppm} C(\pi)q^{|\pi|} &= e^{-\lambda}\ch L(\lambda),\\
    \sum_{\pi\in \, _t^d\Ppm} C(\pi)q^{|\pi|} &= \frac{e^{-\lambda}\ch L(\lambda)}{(q^d;q^d)_{\infty}}.
  \end{align*}
\end{theo}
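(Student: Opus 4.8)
The plan is to deduce Theorem \ref{th:char} from the (KMN)$^2$ crystal base character formula (Theorem \ref{theorem:wtchar}) by constructing an explicit weight-preserving bijection between $\lambda$-paths and multi-grounded partitions, following the strategy of \cite{DK19-2,DKmulti}. First I would fix the normalisation: setting $q = e^{-\delta/(d_0 D)}$ and $c_b = e^{\wt b}$ for all $b \in \B$, the identity $c_{g_0}\cdots c_{g_{t-1}} = 1$ follows because $\wt g_0 + \cdots + \wt g_{t-1} = 0$ in $\bar P$ (the classical weights of the ground state path over one period cancel, since the ground state path has $\bar\lambda$-weight equal to a shift of $\bar\lambda$ and is periodic with period $t$; more precisely this is a standard consequence of the defining properties of the ground state path in \cite[Section 1.4]{KMN2}).

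Next I would set up the correspondence. Given a $\lambda$-path $\p = (p_k)_{k=0}^\infty$ with $p_{m't+i} = g_i$ for all $m' \geq m$ and $i \in \{0,\dots,t-1\}$, I would associate to it the sequence of coloured integers $\pi_k := (n_k)_{c_{p_k}}$ for $0 \leq k \leq mt-1$, together with the $t$ fixed ground parts $u^{(0)}_{c_{g_0}},\dots,u^{(t-1)}_{c_{g_{t-1}}}$, where the integers $n_k$ are read off from the partial sums of energies appearing in the second form of the weight formula in Theorem \ref{theorem:wtchar}. Concretely, one sets
\[
n_k := -\frac{1}{t}\sum_{\ell=0}^{t-1}(\ell+1)DH_\lambda(g_{\ell+1}\ot g_\ell) + \sum_{\ell=k}^{mt-1} DH_\lambda(p_{\ell+1}\ot p_\ell),
\]
so that consecutive differences are $\pi_k - \pi_{k+1} = DH_\lambda(p_{k+1}\ot p_k)$, i.e.\ $\pi_k \gtrdot \pi_{k+1}$ by \eqref{eq:DHLeg}. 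I would then check three things: (i) the last $t$ parts are exactly $u^{(0)}_{c_{g_0}},\dots,u^{(t-1)}_{c_{g_{t-1}}}$ as given in Proposition \ref{prop:multigroundedparts} — this is a direct computation using $p_{mt+i}=g_i$ and telescoping the energy sum; (ii) the tail condition $(\pi_{s-t},\dots,\pi_{s-1}) \neq (u^{(0)},\dots,u^{(t-1)})$ corresponds exactly to choosing $m$ minimal (the "stabilisation index"), which makes the map well-defined and injective; (iii) using \eqref{eq:withlambda} and $q = e^{-\delta/(d_0D)}$, the weight $\wta\p$ matches $\lambda + (\text{exponent giving } q^{|\pi|})$ after the substitutions, so $e^{\wta\p} = e^\lambda \cdot C(\pi) q^{|\pi|}$. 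That the number of parts $mt$ is divisible by $t$ is automatic, which is where the subscript $t$ in $_t\Pppm$ comes from. Summing over all $\lambda$-paths and applying Theorem \ref{theorem:wtchar} then yields the first identity. For the relation $\gg$ version with the extra factor $1/(q^d;q^d)_\infty$, I would compose this bijection with the standard "insertion of a partition into $d\Z_{\geq 0}$" argument: an element of $_t^d\Ppm$ is obtained from an element of $_t\Pppm$ together with a partition into parts that are multiples of $d$, whose generating function is $1/(q^d;q^d)_\infty$; conversely every element of $^d\Ppm$ decomposes uniquely this way by extracting, from each difference $\pi_k - \pi_{k+1} - DH_\lambda(p_{k+1}\ot p_k) \in d\Z_{\geq 0}$, its "excess" over the minimal $\gtrdot$-value.

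The main obstacle I anticipate is bookkeeping around the non-integrality of $H_\lambda$ and the choice of $D$: one must verify that the $n_k$ defined above are genuinely integers (this is precisely why $D$ was chosen to satisfy $DH_\lambda(\B\ot\B)\subset\Z$ and $\frac1t\sum_{k=0}^{t-1}(k+1)DH_\lambda(g_{k+1}\ot g_k)\in\Z$), and that the ground parts $u^{(k)}$ likewise lie in $\Z_{\Co_\B}$. A secondary subtlety is checking that the constructed map is surjective onto $_t\Pppm$ — i.e.\ that \emph{every} multi-grounded partition with the prescribed ground and relation $\gtrdot$ arises from a genuine $\lambda$-path; this uses the fact that $\B\ot\B$ is connected so that any prescribed sequence of energy values is realised by some path, combined with the uniqueness clause in Definition \ref{deff:multiground} to pin down the ground. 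Since both of these points are exactly what was established in \cite{DKmulti}, the cleanest exposition is to present the bijection and the weight-matching computation in detail and cite \cite{DKmulti} for the verification that it restricts correctly to the subsets indexed by $t$ and $d$.
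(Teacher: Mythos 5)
This theorem is stated in the paper as a quoted result from \cite{DKmulti} and is not proved there, so the only meaningful comparison is with the strategy of that reference (which the paper itself mirrors in the special case of Proposition \ref{prop:main}). Your proposal reconstructs exactly that strategy --- the weight-preserving bijection from $\lambda$-paths to multi-grounded partitions via tail sums of $DH_\lambda$, the identification of the ground parts with the $u^{(k)}$ of Proposition \ref{prop:multigroundedparts}, the minimality of $m$ giving the tail condition, and the extraction of a partition into multiples of $d$ for the $\gg$ version --- and the computations you sketch (telescoping of $\sum(k+1)DH_\lambda$ against $|\pi|$, and $\sum_{k=0}^{t-1}\wt g_k=0$ giving $c_{g_0}\cdots c_{g_{t-1}}=1$) all check out.
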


This theorem will be used directly to prove Proposition \ref{prop:characterlevel2} in Section \ref{sec:level2} and the proof of Theorem \ref{th:mainidentity} in Section \ref{sec:crystal} relies on a variant of these ideas.

\section{Perfect crystals for standard $A_1^{(1)}$ modules of level $n$}
\label{sec:crystal}
The level $n$ perfect crystal $\B_n$ of $A_1^{(1)}$ is shown on Figure \ref{fig:crystalleveln},
and for all $i$, the weights and null root are given, respectively, by
\begin{equation}
  \label{eq:star}
  \begin{aligned}
    \wt b_i &= (2i -n)\Lambda_0 + (n-2i)\Lambda_1 = \left(\frac{n}{2}-i\right)\alpha_1, \\
    \delta &=\alpha_0+\alpha_1,
  \end{aligned}
\end{equation}
where $\alpha_0$ and $\alpha_1$ are the simple roots ~\cite[Example 10.5.2]{HK}.

\begin{figure}[ht]
  \begin{center}
    \includegraphics[width=0.7\textwidth]{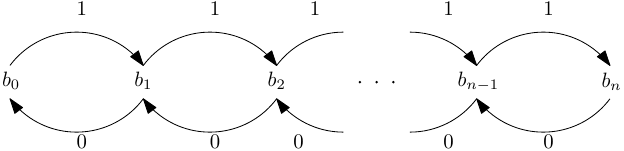}
  \end{center}
  \caption{The level $n$ perfect crystal $\B_n$ of $A_1^{(1)}$}
  \label{fig:crystalleveln}
\end{figure}

There are $n+1$ standard $A_1^{(1)}$ modules of level $n$, namely whose highest weight is of the form
$$\Lambda_{i,n}:= i \Lambda_0 + (n-i) \Lambda_1,$$
for $i \in \{0,\dots,n\}$.
Their ground state paths are
$$\p_i := \p_{\Lambda_{i,n}} = \cdots \ot b_{n-i} \ot b_i \ot b_{n-i} \ot b_i.$$

When $n$ is even, then there is a constant ground state path for $i=n/2$. Otherwise, all the ground state paths have period $2$. By \cite[Lemma 4.6.2]{KMN2}, $\B_n\ot\B_n$ is also a level $n$ perfect crystal, the corresponding energy function is given by the matrix below (where $H_n (b_i \ot b_j) $ is given in column $i$, row $j$):

\[
  H_n=
  \bordermatrix{
    \text{}&b_0& b_1 &b_2 &\cdots &b_{n-2}& b_{n-1} &b_n
    \cr b_0&n&n&n&\cdots&n&n &n
    \cr b_1&n-1&n-1&n-1&\cdots&n-1 &n-1&n
    \cr b_2&n-2&n-2&n-2&\cdots&n-2 &n-1 &n
    \cr \vdots&\vdots&\vdots&\vdots&\cdots &\vdots &\vdots &\vdots
    \cr b_{n-2}&2&2&2&\cdots&n-2 &n-1 &n
    \cr b_{n-1}&1&1&2&\cdots&n-2 &n-1 &n
    \cr b_n&0&1&2&\cdots&n-2 &n-1 &n
  } .
\]

We can rewrite this more concisely, for all $i,j \in \{0, \dots, n\}$, as
\begin{equation*}
  H_n (b_i \ot b_j) = \max(i,n-j).
\end{equation*}

While, to prove Theorem \ref{th:mainidentity}, we will not exactly use the theory of multi-grounded partitions explained in Section \ref{sec:ground}, we still need to consider a (pseudo) energy function $H_\lambda$ such that the sum of its values on the ground state path of $\lambda$ is $0$. For this, we use \eqref{eq:Hlamb}.

Here, for all the ground state paths $\p_i$, we have
$$H_n(b_{n-i}\ot b_i) + H_n(b_i\ot b_{n-i})=n.$$

Thus, for all standard modules $\lambda$ of level $n$, we have
$$H_{\lambda} = H_n - \frac{n}{2},$$
or in other words, for all $i,j \in \{0, \dots, n\}$:
\begin{equation*}
  H_\lambda (b_i \ot b_j) = \max(i-\frac{n}{2},\frac{n}{2}-j),
\end{equation*}

\[
  H_\lambda=
  \bordermatrix{
    \text{}&b_0& b_1 &b_2 &\cdots &b_{n-2}& b_{n-1} &b_n
    \cr \vspace{0.3em} b_0&\frac{n}{2}&\frac{n}{2}&\frac{n}{2}&\cdots&\frac{n}{2}&\frac{n}{2} &\frac{n}{2}
    \cr \vspace{0.3em} b_1&\frac{n}{2}-1&\frac{n}{2}-1&\frac{n}{2}-1&\cdots&\frac{n}{2}-1 &\frac{n}{2}-1&\frac{n}{2}
    \cr \vspace{0.3em}b_2&\frac{n}{2}-2&\frac{n}{2}-2&\frac{n}{2}-2&\cdots&\frac{n}{2}-2 &\frac{n}{2}-1 &\frac{n}{2}
    \cr \vspace{0.3em} \vdots&\vdots&\vdots&\vdots&\cdots &\vdots &\vdots &\vdots
    \cr \vspace{0.3em} b_{n-2}&-\frac{n}{2}+2&-\frac{n}{2}+2&-\frac{n}{2}+2&\cdots&\frac{n}{2}-2 &\frac{n}{2}-1 &\frac{n}{2}
    \cr \vspace{0.3em} b_{n-1}&-\frac{n}{2}+1&-\frac{n}{2}+1&-\frac{n}{2}+2&\cdots&\frac{n}{2}-2 &\frac{n}{2}-1 &\frac{n}{2}
    \cr \vspace{0.3em} b_n&-\frac{n}{2}&-\frac{n}{2}+1&-\frac{n}{2}+2&\cdots&\frac{n}{2}-2 &\frac{n}{2}-1 &\frac{n}{2}
  } .
\]

When $n$ is even, $H_{\lambda}$ is really an energy function. Otherwise, it has all the properties of an energy function, except that it has half-integral values.

To obtain the very simple difference conditions of Theorem \ref{th:mainidentity}, we perform yet another rewriting of $H_{\lambda}$.
For $0\leq i,j\leq n$, we have
\begin{align}
  H_\lambda (b_i \ot b_j) &= \frac{1}{2}\max(j+i-n,n-i-j) +\frac{1}{2}(i-j) \,, \nonumber
  \\
                          &= \frac{1}{2}|n-i-j|+\frac{1}{2}(i-j). \label{eq2}
\end{align}
Thus
\begin{equation}
  \label{eq1}
  H_{\lambda}(b_{n-i} \ot b_{i}) + 2 H_{\lambda}(b_{i} \ot b_{n-i}) = H_{\lambda}(b_{i} \ot b_{n-i}) = i-\frac{n}{2}.
\end{equation}

Let us consider the set of colours $\C=\{c_0,\dots,c_n\}$ and define the difference condition $\Delta(c_a,c_b)=|a-b|$. Denote by $\Pp_{i,n}$ the set of grounded partitions with colours $\C$, ground $c_i$ and relation $\gtrdot$ defined by:
\begin{equation*}%
  k_{c_a} \gtrdot l_{c_b} \text{ if and only if } k-l= \Delta(c_a,c_b).
\end{equation*}

We shall now prove a key proposition which relates the partitions in $\Pp_{i,n}$ and the character of $L(\Lambda_{i,n})$.

\begin{prop}\label{prop:main}
Let $x=\exp(-\frac{\alpha_0+\alpha_1}{2})$, $c_k= \exp(k\cdot\frac{\alpha_1-\alpha_0}{2})$, and
  $$C'(\pi) = \prod_{a=0}^{s} (c_{\pi_a})^{-1^{a+1}}
  \times \begin{cases}
    1  & \text{if} \ s \ \text{is odd}\\
    c_i & \text{if} \ s \ \text{is even.}\\
  \end{cases}
  $$
  Then there exists a bijection $\Phi$ between $\Pp(\Lambda_{i,n})$ and $\Pp_{i,n}$ such that, if $\Phi(\p)=\pi =(\pi_0,\dots,\pi_s,0_{c_i})$, then
  \begin{equation}\label{eq:weightsize}
    \exp(\wta \p -\Lambda_{i,n}) = C'(\pi)q^{|\pi|}.
  \end{equation}
  Hence,
  \begin{equation}\label{eq:charmain}
    \sum_{\pi \in \Pp_{i,n}} C'(\pi) x^{|\pi|} = \exp(-\Lambda_{i,n})\ch L(\Lambda_{i,n}).
  \end{equation}
\end{prop}
\begin{proof}
  In this proof, we set $\la=\Lambda_{i,n}$.
  Let $\p=(b_{i_k})_{k\geq 0}$, with $i_k\in \{0,\dots,n\}$ for all $k$, be a $\la$-path, and let $m$ be the unique non-negative integer such that $(i_{2m-2},i_{2m-1})\neq(i,n-i)$ and $(i_{2m'},i_{2m'+1})=(i,n-i)$ for all $m'\geq m$.
  We compute $\wta \p$, defined in \eqref{eq:withlambda}, in terms of the roots $\alpha_0$ and $\alpha_1$. By \eqref{eq:star}, \eqref{eq2} and \eqref{eq1}, we have
  \begin{align*}
    \wta \p&= \lambda +\sum_{k=0}^{2m-1} \left(\frac{n}{2}-i_k\right)\alpha_1 +m(\alpha_0+\alpha_1)\left(i-\frac{n}{2}\right)+\frac{\alpha_0+\alpha_1}{2}\sum_{k=0}^{2m-1} (k+1)(i_k-i_{k+1})\\
           &\qquad\qquad -\frac{\alpha_0+\alpha_1}{2}\sum_{k=0}^{2m-1}(k+1)|n-i_{k+1}-i_k|.
  \end{align*}
  Using the fact that
  $$\sum_{k=0}^{2m-1} (k+1)(i_k-i_{k+1}) = \sum_{k=0}^{2m-1} i_k -2m i_{2m},$$
  and that $i_{2m}=i$ because $p_{2m}=b_i$, we simplify and obtain
  \begin{align*}
    \wta \p&= \lambda +\sum_{k=0}^{2m-1} \left(\frac{n}{2}-i_k\right)\alpha_1 +m(\alpha_0+\alpha_1)\left(i-\frac{n}{2}\right) -m(\alpha_0+\alpha_1)i+\frac{\alpha_0+\alpha_1}{2}\sum_{k=0}^{2m-1}i_k\\
           &\qquad\qquad -\frac{\alpha_0+\alpha_1}{2}\sum_{k=0}^{2m-1}(k+1)|n-i_{k+1}-i_k|\\
           &= \lambda +\sum_{k=0}^{2m-1} \left[ \left(\frac{n}{2}-i_k\right)\alpha_1-\frac{\alpha_0+\alpha_1}{2}\left(\frac{n}{2}-i_k\right) \right] -\frac{\alpha_0+\alpha_1}{2}\sum_{k=0}^{2m-1}(k+1)|n-i_{k+1}-i_k|\\
           &= \lambda +\sum_{k=0}^{2m-1} \left(\frac{n}{2}-i_k\right)\frac{\alpha_1-\alpha_0}{2} -\frac{\alpha_0+\alpha_1}{2}\sum_{k=0}^{2m-1}(k+1)|n-i_{k+1}-i_k|.
  \end{align*}
  By setting $j_{2k+1}=n-i_{2k+1}$ and $j_{2k}=i_{2k}$, this reduces to
  \begin{align}
    \wta \p -\lambda&= \sum_{k=0}^{2m-1} (-1)^{k+1} j_k \cdot \frac{\alpha_1-\alpha_0}{2} -\frac{\alpha_0+\alpha_1}{2}\sum_{k=0}^{2m-1}(k+1)|j_{k+1}-j_k|\nonumber\\
                    &= \sum_{k=0}^{2m-1} \left((-1)^{k+1} j_k \cdot \frac{\alpha_1-\alpha_0}{2} -\frac{\alpha_0+\alpha_1}{2}\sum_{l=k}^{2m-1}|j_{l+1}-j_l| \right)\!.
                      \label{eq:newground}
  \end{align}
  Observe that when $i_{2m-1}=n-i$, we have $|j_{2m-1}-j_{2m}|=|i-i|=0$. Furthermore, $(i_{2m-2},i_{2m-1})\neq (i,n-i)$ if and only if $(j_{2m-2},j_{2m-1})\neq (i,i)$. Thus, for $k\in\{0,\dots, 2m-1\}$, we set
  $$\pi_k=\left(\sum_{l=k}^{2m-1}|j_k-j_{k+1}|\right)_{c_{j_k}},$$
  and
  $$
  \Phi(\pi)= \begin{cases}
    (\pi_0,\dots,\pi_{2m-2},0_{c_i}) & \text{if} \ \ j_{2m-1}=i\\
    (\pi_0,\dots,\pi_{2m-1},0_{c_i}) & \text{if} \ \ j_{2m-1}\neq i.
  \end{cases}
  $$
  Hence, for all $0\leq k\leq 2m-2$, we have
  $\pi_{k}-\pi_{k+1}=|j_k-j_{k+1}|$, i.e.\ $\pi_k\gtrdot \pi_{k+1}.$
  Finally, when $j_{2m-1}=i$, we indeed have $j_{2m-2}\neq i$, so $\pi$ is a grounded partition with ground $c_i$. When $j_{2m-1}\neq i$, $\pi$ is also a grounded partition with ground $c_i$, as $\pi_{2m-1}=|j_{2m-1}-j_{2m}|=|j_{2m-1}-i|$ and then $\pi_{2m-1}\gtrdot 0_{c_i}$. Therefore, $\pi$ always belongs to $\Pp_{i,n}$, and \eqref{eq:newground} becomes
  $$\wta \p -\Lambda_{i,n} = \frac{\alpha_1-\alpha_0}{2} \left(\sum_{k=0}^{2m-1} (-1)^{k+1} j_k\right)  -\frac{\alpha_0+\alpha_1}{2} |\pi|,$$
  where $|\pi|$ denotes the size of $\pi$.
  \\\\
  Let us now give the inverse bijection. Let $\pi = (\pi_0,\dots,\pi_{s},0_{c_i}) \in \Pp_{i,n}$, with $c(\pi_k)=j_k$ for all $0\leq k\leq s+1$. In particular, $j_{s+1}=i$. Then set $m=\left\lceil \frac{s+1}{2}\right\rceil$,
  $$
  (i_{2k},i_{2k+1}) = \begin{cases}
    (j_{2k},n-j_{2k+1} ) & \text{if}\ k\leq m-1\\
    (i,n-i) & \text{if}\ k\geq m,\\
  \end{cases}
  $$
  and $\Phi^{-1}(\pi)= \p = (b_{i_k})_{k\geq 0}$. Since $j_s\neq i$ and $s\in \{2m-2,2m-1\}$, we then have  $(j_{2m-2},j_{2m-1})\neq (i,i)$, so that $m$ is the unique non-negative integer satisfying $(i_{2m-2},i_{2m-1})\neq(i,n-i)$ and $(i_{2m'},i_{2m'+1})=(i,n-i)$ for all $m'\geq m$. Hence, $\p \in \Pp(\Lambda_{i,n})$. Moreover, as $j_{s+1}= i$ and $s+1\in \{2m-1,2m\}$, setting $j_{2m}=i$ yields
  \begin{align*}
  \pi_k &= \sum_{l=k}^s |j_l-j_{l+1}| = \sum_{l=k}^{2m-1} |j_l-j_{l+1}|,\\
  |\pi| &=\sum_{k=0}^s \pi_k = \sum_{k=0}^{2m-1} \sum_{l=k}^{2m-1} |j_l-j_{l+1}|,
  \end{align*}
  and applying \eqref{eq:newground} to $\p$ gives
  $$\wta \p -\Lambda_{i,n} = \frac{\alpha_1-\alpha_0}{2} \left(\sum_{k=0}^{2m-1} (-1)^{k+1} j_k\right)  -\frac{\alpha_0+\alpha_1}{2} |\pi|.$$\\\\
  The uniqueness of the relations between $s$ and $m$, as well as the definition of the sequences $(i_k)_{k\geq 2m-1}$ and $(j_{k})_{k=0}^{s+1}$, imply that $\Phi$ and $\Phi^{-1}$ are inverses of each other. The relation \eqref{eq:weightsize} comes from the fact that, for $\pi = \Phi(\p)$,
  $$\wta \p -\Lambda_{i,n} = \frac{\alpha_1-\alpha_0}{2} \left(\sum_{k=0}^{2m-1} (-1)^{k+1} j_k\right)  -\frac{\alpha_0+\alpha_1}{2} |\pi|.$$
\end{proof}
We now consider the set $\Pp_{i,n}^{\geq}$ of grounded partitions with colours $\C$, ground $c_i$ and relation $\gg$ defined by:
\begin{equation*}%
  k_{c_a}\gg l_{c_b} \text{ if and only if } k-l\geq  \Delta(c_a,c_b)\,\cdot
\end{equation*}
Denote by $\Pp$ the set of classical partitions.
\begin{prop}\label{prop:mainbis}
  There exists a weight-preserving bijection $\Psi$ between $\Pp_{i,n}^{\geq}$ and $\Pp_{i,n}\times \Pp$, i.e. if $\Psi(\pi)=(\mu,\nu)$ then $|\pi|=|\mu|+|\nu|$.
\end{prop}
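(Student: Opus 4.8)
The plan is to realise $\Psi$ by the standard ``excess'' bookkeeping. For $\pi=(\pi_0,\dots,\pi_s,0_{c_i})\in\Pp_{i,n}^{\geq}$, write $c(\pi_k)=c_{j_k}$, set $j_{s+1}=i$, and put $\Delta_k=|j_k-j_{k+1}|$ and $e_k=\pi_k-\pi_{k+1}-\Delta_k\ge 0$ for $0\le k\le s$ (with $\pi_{s+1}=0$). From the telescoping $\pi_k=\sum_{l=k}^{s}(\Delta_l+e_l)$ one reads off $|\pi|=\sum_{l=0}^{s}(l+1)(\Delta_l+e_l)$, which dictates the split: the partition $\nu$ should have weight $\sum_l (l+1)e_l=\sum_{j}\widetilde e_j$, where $\widetilde e_j:=e_j+e_{j+1}+\dots+e_s$ is a non-increasing sequence, so I take $\nu$ to be $(\widetilde e_0,\widetilde e_1,\dots,\widetilde e_s)$ with zero parts deleted; and $\mu$ should be a $\gtrdot$-grounded partition of weight $\sum_l (l+1)\Delta_l$.

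The one delicate point is the choice of $\mu$. The naive choice --- the $\gtrdot$-partition saturating the colour sequence of $\pi$ --- is defective because a block of parts coloured $c_i$ sitting just above the ground contributes no $\Delta$, so it would produce a partition ending in a string of $0_{c_i}$'s, which is not in $\Pp_{i,n}$. Hence I let $r$ be the largest index with $j_r\neq i$ (or $r=-1$), discard the $c_i$-tail $j_{r+1}=\dots=j_s=i$, and let $\mu=(\mu_0,\dots,\mu_r,0_{c_i})$ be the unique $\gtrdot$-partition with colour sequence $(c_{j_0},\dots,c_{j_r},c_i)$ ending in $0_{c_i}$; since $j_r\neq i$ we get $\mu_r\ge 1$, so $\mu\in\Pp_{i,n}$. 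Because $\Delta_l=0$ for $r<l\le s$, truncating does not change $\sum_l (l+1)\Delta_l$, so $|\pi|=|\mu|+|\nu|$ on the nose (and a short parity check, folding together the $(-1)^{a+1}$ and the even/odd correction in $C'$, also gives $C'(\pi)=C'(\mu)$, which is what the downstream generating-function argument needs).

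For the inverse, I must recover from $(\mu,\nu)$ how long the truncated $c_i$-tail was. The key observation is that if this tail is non-empty (i.e. $r<s$) then $\pi_s$ has colour $c_i$ and $\pi_s\neq 0_{c_i}$, hence $\pi_s\ge 1$ and so $e_s=\pi_s\ge 1$; therefore $\widetilde e_s\ge 1$ and $\nu$ has \emph{exactly} $s+1$ parts, whereas if $r=s$ then $\nu$ has at most $r+1$ parts. So, given $\mu=(\mu_0,\dots,\mu_r,0_{c_i})$ and $\nu$ with $p$ parts: if $p\le r+1$ set $s=r$; if $p\ge r+2$ set $s=p-1$ and append $s-r$ colours $c_i$ to the colour sequence of $\mu$. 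In either case set $\widetilde e_j=\nu_{j+1}$ for $0\le j\le s$ (with $\nu_k=0$ for $k>p$), $e_j=\widetilde e_j-\widetilde e_{j+1}\ge 0$, and rebuild $\pi_k=\sum_{l=k}^{s}(\Delta_l+e_l)$ from the colour sequence. One then checks directly that $\pi\in\Pp_{i,n}^{\geq}$, that $r$ really is its last non-$c_i$ index, and that this map and $\Psi$ are mutually inverse; the degenerate case $\nu=\emptyset$ corresponds to $\pi=\mu$.

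The only genuine obstacle, as above, is the redundancy caused by the $c_i$-tail of $\pi$, which is invisible to $\mu$ and must be encoded inside $\nu$; the observation that a non-empty such tail forces the bottom excess $e_s$ to be positive is exactly what makes its length recoverable, and hence what makes $\Psi$ bijective. Everything else --- the $\gg$/$\gtrdot$ verifications, the weight identity, and checking that the two maps compose to the identity --- is a routine telescoping computation.
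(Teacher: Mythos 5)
Your construction is correct and is essentially the paper's own bijection: you take the same $r$ (last index with colour $\neq c_i$), the same saturated partition $\mu$ built from the truncated colour sequence, and the same excess partition $\nu$ (your $\widetilde e_j$ equals $\pi_j-\mu_j$, extended by $\pi_j$ on the $c_i$-tail), with the tail length recovered from the number of parts of $\nu$ exactly as in the paper's case split $r<s$ versus $r=s$. The only difference is that you spell out the inverse and the well-definedness checks, which the paper delegates to the analogous Proposition 3.6 of \cite{DK19-2}.
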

\begin{proof}
  Here we only describe the bijection $\Psi$. The proof of its well-definedness is analogous the proof of Proposition 3.6 in \cite{DK19-2}. Let $\pi=(\pi_0,\dots,\pi_{s-1},0_{c_i})\in \Pp_{i,n}^{\geq}$, and set
  $$r=\max \{k \in \{0, \dots , s\}:\,c(\pi_{k-1})\neq c_i\}.$$
  We set $\mu:=(\mu_0,\dots,\mu_{r-1},0_{c_i})$, where the part $\mu_k$ is coloured $c(\pi_k)$ and has size
  $$\sum_{l=k}^{r-1}\Delta(c(\pi_l),c(\pi_{l+1}))\,\cdot$$
  We now build $\nu=(\nu_0,\dots,\nu_{t-1})$. If $r<s$, then $t=s$ and
  $$\nu_k := \begin{cases}
    \pi_k-\mu_k & \text{for} \ k\in \{0,\dots,r-1\},\\
    \pi_k & \text{for} \ k\in \{r,\dots,s-1\}.\\
  \end{cases}
  $$
  Otherwise $r=s$, in which case we set
  $$t=\min\{k \in \{0, \dots , s\}: \pi_k=\mu_k\},$$
  and $\nu_k:=\pi_k-\mu_k$ for all $k\in \{0,\dots,t-1\}$.
\end{proof}

We are now ready to prove our main theorem.
\begin{proof}[Proof of Theorem \ref{th:mainidentity}]
  First observe that the quantity $C_{i,n}(m)$ counts the partitions of $m$ in $\Pp_{i,n}$, and thus
  $$\sum_{m\geq 0} C_{i,n}(m)q^m= \sum_{\pi \in \Pp_{i,n}} q^{|\pi|}.$$
  The principal specialisation of the character, which consists in performing the transformations $e^{-\alpha_0}, e^{-\alpha_1} \mapsto q$, is denoted by $\mathds{1}$. In Proposition \ref{prop:main}, it implies the transformations $x \mapsto q$ and $c_k \mapsto 1$.
  Hence, we obtain that $C'(\pi)=1$ for all $\pi \in \Pp_{i,n}$, and the equality \eqref{eq:charmain} becomes
  $$\sum_{m\geq 0} C_{i,n}(m)q^m= \mathds{1}(\exp(-\Lambda_{i,n})\ch L(\Lambda_{i,n})).$$
  Finally, the right-hand side of \eqref{eq:agcompanion} follows from the principal specialisation of the Weyl--Kac character formula \eqref{eq:WeylKac} for the type $A^{(1)}_{1}$ given in \cite{Le-Mi1}:
  $$\mathds{1}(\exp(-\Lambda_{i,n})\ch L(\Lambda_{i,n})) = \prod_{\alpha \in \Delta_{+}^{\vee}} \left(\frac{1-q^{\langle\Ll_{i,n}+\rho,\alpha\rangle}}{1-q^{\langle\rho,\alpha\rangle}}\right)^{\text{mult }\alpha} = \frac{(q^{i+1},q^{n-i+1},q^{n+2};q^{n+2})_{\infty}}{(q,q,q^2;q^2)_{\infty}},$$
  where the second equality is due to the fact that, in our $A_{1}^{(1)}$ case, we have
  $$
  \begin{cases}
    \rho=\Ll_0+\Ll_1,\\
    \Delta_{+}^{\vee} = \{k\alpha_0^\vee+(k-1)\alpha_1^\vee, (k-1)\alpha_0^\vee+k\alpha_1^\vee,k\alpha_0^\vee+k\alpha_1^\vee\,:\, k \in \Z_{\geq 1}\},\\
    \text{mult } \alpha = 1 \ \ \text{for all }\alpha \in \Delta_{+}^{\vee},\\
    \langle \Ll_u,\alpha_v^\vee\rangle = \chi(u=v),
  \end{cases}
  $$
  where $\Delta_{+}^{\vee}$ is the set of positive coroots (in $A_n^{(1)}$, the coroots can be identified with the roots).
  To prove \eqref{eq:agcompanionbis}, we similarly remark that $C_{i,n}^{\geq}(m)$ counts the partitions of $m$ in $\Pp_{i,n}^{\geq}$, so that by Proposition \ref{prop:mainbis},
  \begin{align*}
    \sum_{m\geq 0} C_{i,n}^{\geq}(m)q^m&= \sum_{\pi \in \Pp_{i,n}^{\geq}} q^{|\pi|}\\
                                       &=\frac{1}{(q;q)_{\infty}} \sum_{\pi \in \Pp_{i,n}} q^{|\pi|}.
        \qedhere                               
  \end{align*}
\end{proof}

\section{Non-specialised character formulas for level 2 standard modules}
\label{sec:level2}
In this section we pay particular attention to the level $2$ standard modules of $A_1^{(1)}$ and show how the relationship between (multi-)grounded partitions and the characters of standard modules may be used to rederive the non-specialised character formulas of Proposition \ref{prop:characterlevel2} using purely combinatorial techniques.

\begin{figure}[h]
  \label{fig:crystallevel2}
  \begin{center}
    \includegraphics[width=0.3\textwidth]{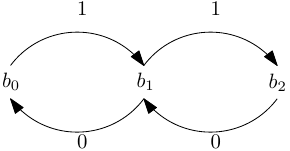}
  \end{center}
  \caption{The level $2$ perfect crystal of $A_1^{(1)}$}
\end{figure}

The  perfect crystal $\mathcal{B}_2$ of $A_1^{(1)}$ level $2$ can be seen in Figure \ref{fig:crystallevel2}, and the an energy matrix is given by

\[
  H_2=
  \bordermatrix{
    \text{}&b_0& b_1 &b_2
    \cr b_0&2&2&2
    \cr b_1&1&1&2
    \cr b_2&0&1&2}.
\]

Here, we have three highest weights of level $2$:
\begin{itemize}
\item $\Lambda_0 + \Lambda_1$, with ground state path $\cdots \ot b_1 \ot b_1$,
\item $2 \Lambda_0$, with ground state path $\cdots \ot b_0 \ot b_2 \ot b_0 \ot b_2$,
\item $2 \Lambda_1$, with ground state path $\cdots \ot b_2 \ot b_0 \ot b_2 \ot b_0$.
\end{itemize}

Note that $H_2$ is exactly the non-dilated matrix of difference conditions for Capparelli's identity reformulated by the first author \cite{DousseCapaPrimc}. Unfortunately, this does not allow us to directly connect Capparelli's identity to perfect crystals, as the connection is actually made with energy functions whose sum is zero on the ground state paths.

In order to compute our character formulas, we define the energy function satisfying that property:
\[
  H=
  \bordermatrix{
    \text{}&b_0& b_1 &b_2
    \cr b_0&1&1&1
    \cr b_1&0&0&1
    \cr b_2&-1&0&1}.
\]
We now use the theory of grounded and multi-grounded partitions to prove Proposition \ref{prop:characterlevel2}.

\subsection{The module $L(\Lambda_0+\Lambda_1)$}
The highest weight $\Lambda_0 + \Lambda_1$ has a constant ground state path. We can thus apply the theory of grounded partitions directly to the energy matrix $H$ to prove \eqref{eq:a11} of Proposition \ref{prop:characterlevel2}.

\begin{proof}[Proof of \eqref{eq:a11} of Proposition \ref{prop:characterlevel2}]
  We set $q= e^{-\delta}, c_0= e^{\alpha_1}, c_1=1, c_2 = e^{-\alpha_1}$, and let $\mathcal{P}^{\gg}_{c_1}$ denote the set of grounded partitions with colour set $\{c_0,c_1,c_2\}$, ground $c_0$ and relation $\gg$ given by
  $$k_{c_i}\gg l_{c_{j}} \Longleftrightarrow k-l\geq H(b_j\ot b_i).$$
  Then by Theorem \ref{th:char}, we have
  \begin{equation}
    \label{eq:char01}
    \sum_{\pi\in \mathcal{P}^{\gg}_{c_1}} C(\pi)q^{|\pi|}= \frac{e^{-(\Lambda_0 + \Lambda_1)}\ch L(\Lambda_0 + \Lambda_1)}{(q;q)_{\infty}}.
  \end{equation}

  Thus to compute the non-specialised character of $L(\Lambda_0 + \Lambda_1)$, we only need to find a nice expression for the generating functions of grounded partitions in $\mathcal{P}^{\gg}_{c_1}$. To do this, we note that the difference conditions given by the matrix $H$ correspond to the partial order
  \[\cdots \ll 0_{c_1} \ll \begin{array}{c} 1_{c_0} \\ 0_{c_{2}} \end{array} \ll 1_{c_1}\ll\begin{array}{c} 2_{c_{0}} \\ 1_{c_2}\end{array}\ll 2_{c_1}\ll\begin{array}{c} 3_{c_{0}} \\ 2_{c_2}\end{array}\ll \cdots .\]

  By definition, the grounded partitions in $\mathcal{P}^{\gg}_{c_1}$ must have $0_{c_1}$ as the last part, and the penultimate part should be different from $0_{c_1}$. The parts $k_{c_1}$ can repeat arbitrarily many times because $H(b_1 \ot b_1)=0$. Thus, including the last part $0_{c_1}$ generated by $c_1$, the parts coloured $c_1$ are generated by
  $$\frac{c_1}{(c_1 q;q)_{\infty}}.$$
  Now the parts coloured $c_0$ and $c_2$ can also appear several times, but always in sequences of the form
  \begin{equation*}\label{eq:twistedseq}
    \cdots \ll k_{c_0}\ll (k-1)_{c_2}\ll k_{c_0}\ll (k-1)_{c_2}\ll\cdots .
  \end{equation*}
  The generating function of such a sequence for a fixed integer $k \geq 1$ is given by
  $$ \frac{(1+c_{0}q^{k})(1+c_2q^{k-1})}{(1-c_0c_2q^{2k-1})},$$
  where the denominator generates pairs $(k_{c_0},(k-1)_{c_2})$ that can repeat arbitrarily many times, and the numerator accounts for the possibility of having an isolated $(k-1)_{c_2}$ on the left end of the sequence, or an isolated $k_{c_0}$ on the right end of the sequence. Multiplying this over all integers $k \geq 1$ gives the generating function for the parts coloured $c_0$ and $c_2$:
  $$\frac{(-c_0 q;q)_{\infty}(-c_2;q)_{\infty}}{(c_0c_2q;q^2)_{\infty}}.$$

  Thus the generating function for the grounded partitions in $\mathcal{P}^{\gg}_{c_1}$ is
  $$\sum_{\pi\in \mathcal{P}^{\gg}_{c_1}} C(\pi)q^{|\pi|}= \frac{c_1 (-c_0 q;q)_{\infty}(-c_2;q)_{\infty}}{(c_1q;q)_{\infty}(c_0c_2q;q^2)_{\infty}}.$$

  Substituting the correct values of $q, c_0, c_1, c_2$ into \eqref{eq:char01}, and using that $1/(q;q^2)_{\infty}=(-q;q)_{\infty}$, we obtain
  $$e^{-(\Lambda_0 + \Lambda_1)}\ch L(\Lambda_0 + \Lambda_1) = (-e^{-\alpha_0};q)_{\infty}(-e^{-\alpha_1};q)_{\infty}(-e^{-\delta};e^{-\delta})_{\infty},$$
  and \eqref{eq:a11} of Proposition \ref{prop:characterlevel2} is proved.
\end{proof}

\bigskip
It is interesting to note that in the purely combinatorial proof of Capparelli's identity given by Lovejoy and the first author in \cite{DousseCapa}, itself inspired by the combinatorial proof of Alladi, Andrews and Gordon \cite{AllAndGor}, they actually also compute the generating function for coloured partitions with difference conditions $\gg$, but with other initial conditions. Indeed, while in the present paper, the last part needs to be $0_{c_1}$, in their work, the smallest part could be any non-negative integer, which gave rise to the generating function
$$\frac{(-c_0;q)_{\infty}(-c_2;q)_{\infty}}{(c_1;q)_{\infty}(c_0c_2q;q^2)_{\infty}}.$$
Then, adding a staircase (i.e.\ a partition $1+2+3+ \cdots$) to such a partition -- which adds $1$ to all the differences between consecutive parts and therefore transforms $H$ into $H_0$ -- transforms it into a partition satisfying the difference conditions of Capparelli's identity. The proof concludes with some $q$-series computations to show the generating function that one obtains is an infinite product, and therefore the generating function for partitions with congruence conditions.

Again, our hope of finding a direct proof of Capparelli's identity via the theory of perfect crystals is not really fulfilled, because on the one hand the proof of Capparelli's identity uses the ``correct'' energy function but with the ``wrong'' initial conditions, and on the other hand, if we start from the ``correct'' energy function and initial conditions, then adding a staircase gives a generating function which is not an infinite product. However, it is still interesting and intriguing that a purely combinatorial proof, with no connection or background in crystal base theory, naturally leads to the study of the exact energy matrix that is meaningful in the theory of perfect crystals and grounded partitions. This hints yet again at a deeper connection between the approaches in combinatorics and representation theory, and between Capparelli's identity (which originally comes from vertex operator algebras) and crystal base theory.

\subsection{The module $L(2\Lambda_0)$}
We now turn to the module $L(2\Lambda_0)$, whose ground state path $\cdots \ot b_0 \ot b_2 \ot b_0 \ot b_2$ is of period $2$. Thus, we need the theory of multi-grounded partitions to prove \eqref{eq:a12} of Proposition \ref{prop:characterlevel2}.

\begin{proof}[Proof of \eqref{eq:a12}]
  As explained in Section \ref{sec:ground}, we need to choose a suitable divisor $D$ of $4$ such that
  $DH(\B_2\ot\B_2)\subset \Z$ and $\frac{1}{2} ( DH(b_0\ot b_2) + 2 DH(b_2\ot b_0)) \in \Z$.
  We have $H(b_2 \ot b_0) = -H (b_0 \ot b_2)=1$, so we can choose $D=2$, and define the relation $\gg$ as
  $$k_{c_i}\gg l_{c_{j}} \Longleftrightarrow k-l\geq DH(b_j\ot b_i).$$
  We want to study the multi-grounded partitions in $\mathcal{P}^{\gg}_{c_2 c_0}$. Using Definition \ref{deff:multiground} or Proposition \ref{prop:multigroundedparts}, we obtain that the two last parts of such partitions are fixed to be $(-1_{c_2},1_{c_0})$.

  Applying Theorem \ref{th:char} with $d=2$ and $D=2$ gives
  \begin{equation}
    \label{eq:tha210}
    \sum_{\pi\in \, _2^2\Pp_{c_2 c_0}^{\gg}} C(\pi)q^{|\pi|} = \frac{e^{-2\Ll_0}\ch L(2\Lambda_0)}{(q^2;q^2)_{\infty}},
  \end{equation}
  where $q=e^{-\delta/2}, c_0=e^{\alpha_1}, c_1=1, c_2 =e^{-\alpha_1}$.

  Recall that $_2^2\Pp_{c_2 c_0}^{\gg}$ is the set of multi-grounded partitions $\pi=(\pi_0,\dots,\pi_{2s-1},-1_{c_2},1_{c_0})$ with relation $\gg$ and ground $c_2, c_0$, \emph{having an even number of parts}, such that for all $k \in \{0, \dots , 2s-1\}$,
  \begin{equation}
    \label{eq:a21parity}
    \pi_k-\pi_{k+1} - 2H(b_{i_{k+1}}\ot b_{i_k}) \in 2 \Z_{\geq0},
  \end{equation}
  where $c(\pi_k)=c_{i_k}$ and $\pi_{2s}=-1_{c_2}$.

  Note that by \eqref{eq:a21parity} and the fact that $u^{(0)}=-1$, the multi-grounded partitions of $_2^2\Pp_{c_2 c_0}^{\gg}$ only have parts with odd sizes, as the differences between consecutive parts are even. Thus we can compute the generating function for partitions in $_2^2\Pp_{c_2 c_0}^{\gg}$ similarly to the last section, by noticing that, combined with \eqref{eq:a21parity}, $\gg$ is the following partial order on the set of coloured odd integers:

  \[\cdots  \ll \begin{array}{c} 1_{c_0} \\ -1_{c_{2}} \end{array} \ll 1_{c_1}\ll\begin{array}{c} 3_{c_{0}} \\ 1_{c_2}\end{array}\ll 3_{c_1}\ll\begin{array}{c} 5_{c_{0}} \\ 3_{c_2}\end{array}\ll \cdots .\]

  At first, let us temporarily forget the condition that the number of parts must be even, and let us compute the generating function.

  The parts $(2k+1)_{c_1}$ can repeat arbitrarily many times because $2H(b_1 \ot b_1)=0$, and thus are generated by
  $$\frac{1}{(c_1 q;q^2)_{\infty}}.$$

Moreover, as in the previous section, the parts coloured $c_0$ and $c_2$ always appear in sequences of the form
  \begin{equation*}
    \cdots \ll (2k+1)_{c_0}\ll (2k-1)_{c_2}\ll (2k+1)_{c_0}\ll (2k-1)_{c_2}\ll\cdots,
  \end{equation*}
and are generated (for $k \geq 1$) by
  $$\frac{(-c_0 q^3;q^2)_{\infty}(-c_2q;q^2)_{\infty}}{(c_0c_2q^4;q^4)_{\infty}}.$$
  Moreover, the fixed tail of the multi-grounded partitions under consideration is $(-1_{c_2},1_{c_0})$, so that for $k=0$, only an isolated $1_{c_0}$ can appear, but not $-1_{c_2}$, otherwise our partition would end with $(-1_{c_2},1_{c_0},-1_{c_2},1_{c_0})$, which is forbidden. Thus we should still multiply our generating function by $c_0c_2(1+c_0q)$.

  Combining all this, we obtain the generating function
  $$\frac{c_0c_2(-c_0 q;q^2)_{\infty}(-c_2q;q^2)_{\infty}}{(c_1 q;q^2)_{\infty}(c_0c_2q^4;q^4)_{\infty}}.$$

  Now, to take back into account the fact that there must be an even number of parts, we use \eqref{eq:evennumberparts}.
  Thus the multi-grounded partitions in $_2^2\Pp_{c_2 c_0}^{\gg}$ are generated by:
  \begin{align*}
    \sum_{\pi\in \ _2^2\Pp_{c_2 c_0}^{\gg}} C(\pi)q^{|\pi|} &= \mathcal{E}_{c_0,c_1,c_2} \left( \frac{c_0c_2(-c_0 q;q^2)_{\infty}(-c_2q;q^2)_{\infty}}{(c_1 q;q^2)_{\infty}(c_0c_2q^4;q^4)_{\infty}} \right)\\
                                                            &= \frac{c_0c_2}{(c_0c_2q^4;q^4)_{\infty}} \left( \frac{(-c_0 q;q^2)_{\infty}(-c_2q;q^2)_{\infty}}{(c_1 q;q^2)_{\infty}} + \frac{(c_0 q;q^2)_{\infty}(c_2q;q^2)_{\infty}}{(-c_1 q;q^2)_{\infty}}\right)\!.
  \end{align*}

  Substituting the correct values of $q, c_0, c_1, c_2$ in \eqref{eq:tha210} and simplifying completes the proof.
\end{proof}

\subsection{The module $L(2\Lambda_1)$}
We conclude with the character formula \eqref{eq:a13} for the module $L(2\Lambda_1)$, whose ground state path $\cdots \ot b_2 \ot b_0 \ot b_2 \ot b_0 $ is again of period $2$. Here we need to study the multi-grounded partitions in $_2^2\Pp_{c_0 c_2}^{\gg}$, and  apart from the fixed last parts which are now $(1_{c_0},-1_{c_2})$, everything works in the same way as in the previous section. We obtain the generating function
$$\mathcal{E}_{c_0,c_1,c_2} \left( \frac{c_0c_2(-c_0 q^3;q^2)_{\infty}(-c_2q^{-1};q^2)_{\infty}}{(c_1 q;q^2)_{\infty}(c_0c_2q^4;q^4)_{\infty}} \right),$$
and \eqref{eq:a13} follows thanks to Theorem \ref{th:char}.

\section{Conclusion}
It would be interesting to find obviously positive non-specialised character formulas and partition theoretic interpretations for standard modules of $A_n^{(1)}$ at all levels $\ell$. In the pair of papers \cite{DK19,DK19-2} of the first and third authors, a partition theoretic interpretation was given for $\ell=1$ and all $n$, and the non-specialised character formula of Kac--Peterson \cite{KacPeterson} was recovered purely combinatorially. In \cite{Meurman}, using vertex operator algebras, Meurman and Primc were able to interpret the Andrews--Gordon--Bressoud partition identities in terms of specialised characters of $A_1^{(1)}$ standard modules of all levels. In the present paper, using crystals, we give a different combinatorial interpretation of the same specialised characters at all levels, and of the non-specialised characters at level $2$.

The method of multi-grounded partitions, introduced by the first and third author in \cite{DKmulti} and summarised in Theorem \ref{th:char}, can, in theory, be applied to any Kac--Moody Lie algebra at any level, as long as the crystal and its energy function are understood. However, the technical difficulties in computing the generating functions of the corresponding multi-grounded partitions grow with the rank of the Lie algebra and the level of the modules, making it a non-trivial task from the combinatorial point of view as well. Nonetheless, we hope to make further progress towards the goal of understanding level $\ell$ standard modules of $A_n^{(1)}$ for all $\ell$ and $n$ in subsequent work.

\subsection*{Acknowledgements}
All three authors were partially supported by the project IMPULSION of IDEXLYON.
J.D. is funded by the ANR COMBIN\'e ANR-19-CE48-0011 and the SNSF Eccellenza grant number PCEFP2 202784.
I.K. is funded by the LABEX MILYON (ANR-10-LABX-0070) of Universit\'e de Lyon, within the program ``Investissements d'Avenir" (ANR-11-IDEX-0007) operated by the French National Research Agency (ANR).
The authors are grateful to the referee for their comments and suggestions which helped improve the quality of the paper.

\bibliographystyle{alpha}
\bibliography{biblio}

\newcommand{\etalchar}[1]{$^{#1}$}
\begin{thebibliography}{KKM{\etalchar{+}}92b}

\bibitem[AAG95]{AllAndGor}
K.~Alladi, G.~E. Andrews, and B.~Gordon.
\newblock Refinements and generalizations of {C}apparelli’s conjecture on
  partitions.
\newblock {\em J. Algebra}, 174:636--658, 1995.

\bibitem[And74]{AndrewsGordon}
G.~E. Andrews.
\newblock An analytic generalization of the {R}ogers-{R}amanujan identities for
  odd moduli.
\newblock {\em Proc. Nat. Acad. Sci. USA}, 71:4082--4085, 1974.

\bibitem[And89]{Andrews89}
G.~E. Andrews.
\newblock On the proofs of the {R}ogers-{R}amanujan identities.
\newblock In {\em $q$-Series and Partitions}, pages 1--14. Springer-Verlag, New
  York, 1989.

\bibitem[Bre79]{BressoudAG}
D.~M. Bressoud.
\newblock A generalization of the {R}ogers--{R}amanujan identities for all
  moduli.
\newblock {\em J. Combin. Theory Ser. A}, 27:64--68, 1979.

\bibitem[Bre83]{Bressoud83}
D.~M. Bressoud.
\newblock An easy proof of the {R}ogers-{R}amanujan identities.
\newblock {\em J. Number Th.}, 16:335--241, 1983.

\bibitem[Cap93]{Capparelli}
S.~Capparelli.
\newblock On some representations of twisted affine {L}ie algebras and
  combinatorial identities.
\newblock {\em J. Algebra}, 154:335--355, 1993.

\bibitem[Cor17]{CorteelRSK}
S.~Corteel.
\newblock {R}ogers-{R}amanujan identities and the
  {R}obinson-{S}chensted-{K}nuth correspondence.
\newblock {\em Proc. Amer. Math. Soc.}, 145(5):2011--2022, 2017.

\bibitem[DK19]{DK19-2}
J.~Dousse and I.~Konan.
\newblock Generalisations of {C}apparelli's and {P}rimc's identities, {II}:
  {P}erfect {$A_{n-1}^{(1)}$} crystals and explicit character formulas.
\newblock arXiv:1911.13189, 2019.

\bibitem[DK22a]{DK19}
J.~Dousse and I.~Konan.
\newblock Generalisations of {C}apparelli's and {P}rimc's identities, {I}:
  {C}oloured {F}robenius partitions and combinatorial proofs.
\newblock {\em Adv. Math.}, 408 B(108571), 2022.

\bibitem[DK22b]{DKmulti}
J.~Dousse and I.~Konan.
\newblock Multi-grounded partitions and character formulas.
\newblock {\em Adv. Math.}, 400(108275), 2022.

\bibitem[DL19]{DousseCapa}
J.~Dousse and J.~Lovejoy.
\newblock Generalizations of {C}apparelli's identity.
\newblock {\em Bull. Lond. Math. Soc.}, 51:193--206, 2019.

\bibitem[Dou20]{DousseCapaPrimc}
J.~Dousse.
\newblock On partition identities of {C}apparelli and {P}rimc.
\newblock {\em Adv. Math.}, 370(107245), 2020.

\bibitem[GM81]{Garsiamilne}
A.~M. Garsia and S.~C. Milne.
\newblock A {R}ogers-{R}amanujan bijection.
\newblock {\em J. Combin. Theory Ser. A}, 31:289--339, 1981.

\bibitem[Gor61]{Gordon61}
B.~Gordon.
\newblock A combinatorial generalisation of the {R}ogers-{R}amanujan
  identities.
\newblock {\em Amer. J. Math.}, 83:393--399, 1961.

\bibitem[GOW16]{GOW16}
M.~J. Griffin, K.~Ono, and S.~O. Warnaar.
\newblock A framework of {R}ogers–{R}amanujan identities and their arithmetic
  properties.
\newblock {\em Duke Math. J.}, 8:1475--1527, 2016.

\bibitem[HK02]{HK}
J.~Hong and S.~Kang.
\newblock {\em Introduction to Quantum Groups and Crystal Bases}, volume~42 of
  {\em Graduate Studies in Mathematics}.
\newblock American Mathematical Society, 02 2002.

\bibitem[Kac90]{Kac}
V.~Kac.
\newblock {\em Infinite dimensional {L}ie algebras}.
\newblock Cambridge UniversityPress, 3rd edition edition, 1990.

\bibitem[Kas91]{10.1215/S0012-7094-91-06321-0}
M.~Kashiwara.
\newblock {On crystal bases of the $Q$-analogue of universal enveloping
  algebras}.
\newblock {\em Duke Mathematical Journal}, 63(2):465 -- 516, 1991.

\bibitem[KKM{\etalchar{+}}92a]{KMN2a}
S.~Kang, M.~Kashiwara, K.~Misra, T.~Miwa, T.~Nakashima, and A.~Nakayashiki.
\newblock Affine crystals and vertex models.
\newblock {\em Int. Journ. Mod. Phys. A}, 7:449--484, 04 1992.

\bibitem[KKM{\etalchar{+}}92b]{KMN2}
S.-J. Kang, M.~Kashiwara, K.~C. Misra, T.~Miwa, T.~Nakashima, and
  A.~Nakayashiki.
\newblock Affine crystals and vertex models.
\newblock In {\em Infinite analysis, {P}art {A}, {B} ({K}yoto, 1991)},
  volume~16 of {\em Adv. Ser. Math. Phys.}, pages 449--484. World Sci. Publ.,
  River Edge, NJ, 1992.

\bibitem[KP84]{KacPeterson}
V.~Kac and D.~Peterson.
\newblock Infinite-dimensional {L}ie algebras, theta functions and modular
  forms.
\newblock {\em Adv. Math.}, 53:125--264, 1984.

\bibitem[LM78]{Le-Mi1}
J.~Lepowsky and S.~Milne.
\newblock Lie algebraic approaches to classical partition identities.
\newblock {\em Adv. in Math.}, 29(1):15--59, 1978.

\bibitem[LW84]{Lepowsky}
J.~Lepowsky and R.~L. Wilson.
\newblock The structure of standard modules, {I}: Universal algebras and the
  {R}ogers-{R}amanujan identities.
\newblock {\em Invent. Math.}, 77:199--290, 1984.

\bibitem[LW85]{Lepowsky2}
J.~Lepowsky and R.~L. Wilson.
\newblock The structure of standard modules, {II}: The case ${A}_1^{(1)}$,
  principal gradation.
\newblock {\em Invent. Math.}, 79:417--442, 1985.

\bibitem[MP87]{Meurman}
A.~Meurman and M.~Primc.
\newblock Annihilating ideals of standard modules of {$sl(2,{\mathbb C})^\sim$}
  and combinatorial identities.
\newblock {\em Adv. Math.}, 64:177--240, 1987.

\bibitem[MP99]{Meurman2}
A.~Meurman and M.~Primc.
\newblock Annihilating fields of standard modules of $sl(2,\mathbb {C})^ \sim$
  and combinatorial identities.
\newblock {\em Mem. Amer. Math. Soc.}, 137:viii + 89 pp., 1999.

\bibitem[MP01]{Meurman3}
A.~Meurman and M.~Primc.
\newblock A basis of the basic $sl(3,\mathbb {C})^\sim$-module.
\newblock {\em Commun. Contemp. Math.}, 3:593--614, 2001.

\bibitem[Nan14]{Nandi}
D.~Nandi.
\newblock {\em Partition Identities Arising from Standard $A_2^{(2)}$-modules
  of Level $4$}.
\newblock PhD thesis, Rutgers University, 2014.

\bibitem[Pri94]{Primc1}
M.~Primc.
\newblock Vertex operator construction of standard modules for ${A}_n^{(1)}$.
\newblock {\em Pacific J. Math.}, 162:143--187, 1994.

\bibitem[Pri99]{Primc}
M.~Primc.
\newblock Some crystal {R}ogers-{R}amanujan type identities.
\newblock {\em Glas. Math. Ser. III}, 34:73--86, 1999.

\bibitem[P{\v{S}}16]{PrimcSikic}
M.~Primc and T.~{\v{S}}iki\'c.
\newblock Combinatorial bases of basic modules for affine {L}ie algebras
  {$C_n^{(1)}$}.
\newblock {\em Journal of Mathematical Physics}, 57(9):091701, 2016.

\bibitem[RR19]{RogersRamanujan}
L.~J. Rogers and S.~Ramanujan.
\newblock Proof of certain identities in combinatory analysis.
\newblock {\em Cambr. Phil. Soc. Proc.}, 19:211--216, 1919.

\bibitem[Sil17]{Siladic}
I.~Siladi\'c.
\newblock Twisted {$sl(3,\mathbb{C})^{\sim}$}-modules and combinatorial
  identities.
\newblock {\em Glasnik Matematicki,}, 52(1):53--77, 2017.

\bibitem[Wak01]{MR1873994}
Minoru Wakimoto.
\newblock {\em Lectures on infinite-dimensional {L}ie algebra}.
\newblock World Scientific Publishing Co., Inc., River Edge, NJ, 2001.

\end{thebibliography}

\end{document}